\DeclareMathOperator{\AKh}{AKh}
\DeclareMathOperator{\CKh}{CKh}
\DeclareMathOperator{\rank}{rank}
\DeclareMathOperator{\lk}{lk}
\DeclareMathOperator{\wrap}{wrap}
\newtheorem{thm}{Theorem}[section]
\newtheorem{cor}[thm]{Corollary}
\newtheorem{conj}[thm]{Conjecture}
\theoremstyle{definition}
\newtheorem{rem}[thm]{Remark}
\newtheorem{defin}[thm]{Definition}
\begin{document}
\title{Annular Khovanov homology and meridional disks}
\author{Gage Martin}
\date{}
\maketitle

\begin{abstract} 

We exhibit infinite families of annular links for which the maximum non-zero annular Khovanov grading grows infinitely large but the maximum non-zero annular Floer-theoretic gradings are bounded. We also show this phenomenon exists at the decategorified level for some of the infinite families. Our computations provide further evidence for the wrapping conjecture of Hoste-Przytycki and its categorified analogue. Additionally, we show that certain satellite operations cannot be used to construct counterexamples to the categorified wrapping conjecture. We also extend the Batson-Seed link splitting spectral sequence to the setting of annular Khovanov homology.
\end{abstract}
\section{Introduction} 

Jones brought new ideas to the field of low-dimensional topology with his construction of the Jones polynomial for links in $S^3$~\cite{jones_polynomial_1985}. A decade and a half later, Khovanov categorified the Jones polynomial with Khovanov homology, a bi-graded abelian group whose graded Euler characteristic recovers the Jones polynomial~\cite{khovanov_categorification_1999}.

Annular Khovanov homology was defined by Asaeda-Przytycki-Sikora~\cite{asaeda_categorification_2004}, who introduced a version of Khovanov homology for links in thickened surfaces. They also showed that the graded Euler characteristic of annular Khovanov homology is the Kauffman skein bracket of annular links defined by Hoste-Przytycki~\cite{hoste_invariant_1989}.

Since the introduction of these invariants, some natural questions have arisen about what, if any, relationship exists between topological properties of links and their Jones polynomial or Khovanov homology. Conjecturally there is a relationship between the Kauffman skein bracket of annular links and certain embedded disks in the annular complement of the link.

\begin{conj}[The wrapping conjecture~\cite{hoste_2_1995}]\label{WrappingConjecture} 
Let $L$ be an annular link in $A \times I$ and let $n$ be the minimal intersection of $L$ with a meridional disk. Then the maximal non-zero annular degree of the Kauffman skein bracket of $L$ is $n$.
\end{conj}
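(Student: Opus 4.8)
Although Conjecture~\ref{WrappingConjecture} is open, the natural line of attack splits it into an upper bound $\deg_{\mathrm{ann}}\langle L\rangle\le n$ and a lower bound $\deg_{\mathrm{ann}}\langle L\rangle\ge n$, where $\deg_{\mathrm{ann}}$ records the top power of the core class in the skein module $\mathbb{Z}[A^{\pm1}][z]$ of the annulus. The upper bound is elementary; the lower bound carries essentially all of the difficulty.

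For the upper bound, first convert the geometric hypothesis into a diagrammatic one. Isotope $L$ so that a meridional disk meeting it minimally is vertical; projecting $A\times I\to A$ then yields a diagram $\mathcal D$ of $L$ meeting the radial arc $\gamma$ (the image of that disk) transversely in exactly $n$ points, with $\gamma$ disjoint from the crossings. Every Kauffman state of $\mathcal D$ is a collection of disjoint circles in $A$, each either nullhomotopic --- contributing a scalar $-A^{2}-A^{-2}$ --- or essential --- contributing one factor of $z$. Since an essential circle must cross $\gamma$ and the circles of a state are disjoint and meet $\gamma$ only among the $n$ fixed points, no state has more than $n$ essential circles, so no monomial above $z^{n}$ survives and $\deg_{\mathrm{ann}}\langle L\rangle\le n$.

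For the lower bound one must show the coefficient of $z^{n}$ in $\langle\mathcal D\rangle$ is nonzero. The contributing states are precisely those with $n$ essential and no nullhomotopic circles, i.e.\ those in which each of the $n$ points of $\mathcal D\cap\gamma$ lies on a distinct core-parallel circle; cutting $\mathcal D$ along $\gamma$ turns these into the resolutions of the $(n,n)$-tangle $T=\mathcal D\setminus\gamma$ whose closure is the $n$-component unlink of cores, among them the resolution closest to the trivial $n$-strand tangle. The crux is then a \textbf{non-cancellation statement}: the $A$-power weights attached to all such extremal states do not sum to zero. For an adequate diagram --- or, in the braid-closure case, for a diagram coming from a reduced, homogeneous braid word --- I would expect this to follow from a leading-coefficient analysis in the spirit of the Kauffman--Murasugi--Thistlethwaite arguments for the ordinary Jones polynomial, where the extreme resolution is essentially unique and the neighboring monomials cannot interfere. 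The general case would then need the extra input that a diagram realizing the wrapping number $n$ can always be chosen adequate in the relevant sense, or a direct cancellation-free combinatorial model for the top term.

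A parallel and potentially more robust route is to categorify the problem: by Asaeda--Przytycki--Sikora, $\langle L\rangle$ is the graded Euler characteristic of annular Khovanov homology in the extra ($k$-)grading, so it suffices to establish the categorified wrapping conjecture $\max\{k:\AKh^{i,j,k}(L)\ne 0\}=n$ together with the absence of Euler-characteristic cancellation in the top $k$-grading. Here the $\mathfrak{sl}_2$-action on $\AKh$ pins down the top weight space, and the annular Batson--Seed spectral sequence and the satellite-rigidity result developed in this paper let one reduce to prime summands or to braid closures. \textbf{The main obstacle}, on either route, is the same: controlling cancellation at the extremal annular grading for a diagram that realizes the wrapping number but is a priori neither adequate, prime, nor a convenient braid closure --- precisely the hard core of the conjecture, and the reason the explicit families computed in this paper are valuable as evidence.
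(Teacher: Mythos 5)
The statement you are proving is Conjecture~\ref{WrappingConjecture}, which is open; the paper contains no proof of it, only verifications for special families (Theorems~\ref{KBthm} and~\ref{KBWhThm} via adequately wrapped diagrams, and Theorems~\ref{generalcable}--\ref{AKhWhThm} for the categorified statement via the annular Batson--Seed spectral sequence). Your proposal is likewise not a proof: the upper bound $\deg_{\mathrm{ann}}\langle L\rangle\le n$ is correct and standard, but the lower bound --- which is the entire content of the conjecture --- is left at the level of ``I would expect this to follow from a leading-coefficient analysis.'' That analysis is exactly the Hoste--Przytycki argument for $\pm$-adequately wrapped diagrams already cited in the paper~\cite{hoste_2_1995}, and the missing step you name --- that a diagram realizing the wrapping number can always be chosen adequate in the relevant sense --- is not known and is precisely where the difficulty lives, so nothing new has been closed. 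A smaller but genuine error: the states contributing to the $z^{n}$ coefficient are \emph{not} ``precisely those with $n$ essential and no nullhomotopic circles''; a state with $n$ essential circles plus trivial circles disjoint from the arc $\gamma$ also contributes, with scalar factors $(-A^{2}-A^{-2})$, so your combinatorial model for the top term already omits contributions that can cancel.

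Your categorified fallback is also weaker than you suggest, and the paper's own history explains why. The route ``establish the categorified wrapping conjecture via Floer-theoretic detection, then rule out Euler-characteristic cancellation'' is essentially the strategy attributed to Grigsby (spectral sequence to the double branched cover plus Juh\'asz's Thurston norm detection), and the families constructed in this paper are exactly the counterexamples to that strategy: their Floer-theoretic annular gradings (annular instanton homology, annular link Floer homology) stay bounded while the wrapping number and the top non-zero $\AKh$ grading grow, because the relevant detection results see the generalized Thurston norm of meridional \emph{surfaces}, which here is at most two. Likewise the annular Batson--Seed spectral sequence and the satellite results of Section~\ref{Applications} only transfer the conjecture from a base link for which it is already known to its cables; they do not reduce a general annular link to braid closures or prime pieces. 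So both of your routes terminate at the same open non-cancellation problem, and the proposal should be presented as a survey of strategy and obstructions rather than as a proof.
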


When the conjecture was stated, Hoste-Przytycki give an argument that the wrapping conjecture holds for any annular link with a $\pm$-adequately wrapped diagram~\cite{hoste_2_1995}.

Since the graded Euler characteristic of annular Khovanov homology is the Kauffman skein bracket, an immediate consequence of Conjecture~\ref{WrappingConjecture} would be:

\begin{conj}[The categorified wrapping conjecture]
Let $L$ be an annular link in $A \times I $ and let $n$ be the minimal intersection of $L$ with a meridional disk. Then the maximal non-zero annular grading of the annular Khovanov homology of $L$ is $n$.
\end{conj}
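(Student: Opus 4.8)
\noindent\emph{Proposed strategy.} The plan is to separate the equality into the two inequalities it hides. Write $n=\wrap(L)$ for the minimal intersection of $L$ with a meridional disk, and recall that $\AKh(L)$ carries an integral annular grading $k$ with $\AKh(L)=\bigoplus_k \AKh_k(L)$. For the inequality ``maximal non-zero annular grading $\le n$'', one would choose a diagram $D$ of $L$ on the annulus $A$ realizing the wrapping number, so that $D$ meets a fixed meridional arc $\mu\subset A$ (the trace of a meridional disk) transversely in exactly $n$ points, with $\mu$ avoiding the crossings of $D$. Every complete Kauffman resolution $R$ of $D$ is a disjoint union of embedded circles in $A$, each trivial or isotopic to the core; since a resolution modifies $D$ only inside small disks around crossings, $R\cap\mu=D\cap\mu$ has $n$ points, and the essential circles of $R$ are nested and each crossed by $\mu$ at least once, so $R$ has at most $n$ essential circles. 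As the annular grading of any generator of $\CAKh(D)$ supported on $R$ is at most the number of essential circles of $R$ in absolute value, $\AKh(L)$ is supported in annular gradings $[-n,n]$; by the conjugation symmetry of $\AKh$ it then suffices to exhibit a non-zero class in annular grading exactly $n$.

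For the reverse inequality, observe that with $D$ as above the top summand $\CAKh_n(D)$ is spanned precisely by the resolutions with exactly $n$ essential circles, each labelled by the weight-$(+1)$ generator, and $\AKh_n(L)=H_*(\CAKh_n(D))$, so the task is to show this homology is non-zero. For a link admitting a $\pm$-adequately wrapped diagram $D$ this should follow by categorifying the Hoste--Przytycki argument already invoked in the excerpt for the decategorified conjecture: $\pm$-adequacy should force the distinguished top-grading generator supported on the all-$A$ (or all-$B$) resolution to be a cycle in $\CAKh_n(D)$ and to survive in homology, by the same mechanism that makes the extreme quantum gradings of the Khovanov homology of an adequate diagram non-vanishing. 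This would already settle the conjecture for the $\pm$-adequately wrapped case.

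The main obstacle is that $\wrap(L)$ is defined by minimizing over all representatives, and there is no guarantee --- indeed it fails in general --- that a diagram realizing the wrapping number is $\pm$-adequately wrapped; without adequacy one loses control of the differential on $\CAKh_n(D)$ and a top-grading cycle can be a boundary. Getting past this appears to require one of: (i) exploiting the Grigsby--Licata--Wehrli $\mathfrak{sl}_2$-action on $\AKh(L)$ to force the extreme weight space to be non-zero; (ii) a spectral-sequence or sutured-manifold comparison between $\AKh(L)$ and an invariant of the annular complement in which the wrapping number is manifestly realized (using the $\AKh\Rightarrow\Kh$ spectral sequence, or a Floer-theoretic analogue), arranged so as to detect the top annular grading; or (iii) reducing an arbitrary diagram to an adequately wrapped one by moves that do not increase $\wrap$ and behave predictably on $\AKh$. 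I expect precisely this last step --- equivalently, the absence of a uniform adequately wrapped model for a general annular link --- to be the crux, which is presumably why the conjecture remains open and why the present paper instead assembles computational evidence for it and rules out certain candidate counterexamples.
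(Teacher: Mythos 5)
The statement you were asked about is a \emph{conjecture}, and the paper does not prove it: it is open, and the paper's contribution is to verify it for particular families of annular links (cables of links built from the tangle $J$, iterated Whitehead doubles, etc.) via an annular Batson--Seed spectral sequence and via explicit all-$1$'s-resolution arguments, and to rule out certain satellite operations as sources of counterexamples. So there is no ``paper proof'' to compare yours against, and your proposal, which is candid that it does not close the argument, should not be read as a failed proof of a theorem but as a strategy sketch for an open problem. Within that frame, your easy direction is fine and standard: for a diagram realizing $\wrap(L)=n$ with the meridional arc avoiding crossings, every complete resolution has at most $n$ essential circles, so $\AKh(L)$ is supported in annular gradings $[-n,n]$. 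Your treatment of the adequately wrapped case is also essentially right, and in fact can be made even simpler than you suggest: Hoste--Przytycki give nonvanishing of the Kauffman bracket in annular degree $n$ for $\pm$-adequately wrapped diagrams, and since that bracket is the graded Euler characteristic of $\AKh$, nonvanishing in degree $n$ of the bracket immediately forces $\AKh_n(L)\neq 0$; no separate ``survival of the extreme generator'' argument is needed. This is the same mechanism the paper exploits in Section~\ref{KBSection} (a unique generator in its quantum grading sitting in the top annular grading), combined there with~\cite[Lemma~10]{hoste_2_1995}.

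The genuine gap is the one you name yourself: an arbitrary annular link need not admit a $\pm$-adequately wrapped diagram realizing $\wrap(L)$, and none of your proposed escapes (the $\mathfrak{sl}_2$-type symmetry, Floer-theoretic comparisons, or diagram moves preserving $\wrap$) is known to detect the top annular grading in general. Indeed the paper's introduction recounts exactly why the Floer-theoretic route fails: the spectral sequences to annular instanton or Heegaard Floer invariants only see the generalized Thurston norm of meridional \emph{surfaces}, and the families constructed here have meridional tori missing the link entirely, so those targets vanish far below grading $n$ while $\AKh$ (conjecturally, and provably for these families) does not. So your assessment is accurate: the proposal establishes only what is already known (the upper bound and the adequately wrapped case), and the remaining direction is precisely the open content of the conjecture that the paper supports with evidence rather than proof.
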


The statement of the categorified wrapping conjecture first appeared in a talk by Eli Grigsby at the MSRI semester-long program {\em Homology theories of knots and links} in spring 2010, where she claimed a proof of the conjecture relying on the spectral sequence to the Floer homology of the double-branched cover~\cite{roberts_knot_2013}~\cite{grigsby_khovanov_2010} and Juhasz's Thurston norm detection results~\cite{juhasz_floer_2008}. In the week after the talk, Matt Hedden and Stephan Wehrli independently found examples that are a subset of the family that appears here in Theorems~\ref{AKhthm} and~\ref{KBthm}, which proved that the argument she suggested did not work. These examples were not pursued further until recent work of Xie~\cite{xie_instantons_2018} relating annular Khovanov homology to instanton Floer homology sparked new interest in the topic. 

Some results about annular gradings where $\AKh(L)$ is non-zero were already known. Grigsby-Ni showed that the categorified wrapping conjecture holds for string links~\cite{grigsby_sutured_2014}. If we allow surfaces of higher genus, work of Xie~\cite{xie_instantons_2018} and Xie-Zhang~\cite{xie_instanton_2019} shows that $\AKh(L)$ is non-zero in the annular grading of the generalized Thurston norm for all meridional surfaces. However, we will see examples of annular links where the generalized Thurston norm is much smaller than the minimal intersection number with a meridional disk.

In this paper we verify that categorified wrapping conjecture holds for some new families of annular links. For a subset of these links we also show that the wrapping conjecture holds on the decategorified level.

\newtheorem*{GenCab}{Theorem~\ref{generalcable}}
\begin{GenCab}
Let $L$ be an $n$-component annular link for which the categorified wrapping conjecture holds and let $L_{m,\textbf{s}}$ be the link where the $i$-th component of $L$ is replaced with a link $s_i$ obtained as the closure of a tangle $T_i$ consisting of an $m$-string link and possibly additional closed components. Then the categorified wrapping conjecture also holds for the cable $L_{m,\textbf{s}}$.
\end{GenCab}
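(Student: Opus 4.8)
The plan is to reduce the statement to a single nonvanishing assertion in the top annular grading. Write $w$ for the wrapping number of $L$. First I would record the elementary fact that the maximal nonzero annular grading of an annular link is always at most its wrapping number: a link meeting some meridional disk in $N$ points has a diagram in which a chosen meridian meets the diagram in $N$ points, and then each Kauffman state has at most $N$ essential circles, so $\AKh$ is supported in annular gradings at most $N$. Fix a diagram $D$ of $L$ realizing $w$, a meridian $\mu$ meeting $D$ in $w$ points and missing the regions where the $T_i$ are to be spliced in, and the resulting cabled diagram $D'$ for $L_{m,\textbf{s}}$, which meets $\mu$ in exactly $mw$ points. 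By the inequality just quoted it suffices to prove that $\AKh(L_{m,\textbf{s}})$ is nonzero in annular grading $mw$: since the maximal annular grading of $L_{m,\textbf{s}}$ is at most its wrapping number, which is in turn at most $mw$, this would simultaneously force the wrapping number of $L_{m,\textbf{s}}$ to equal $mw$, so no separate topological computation is needed. It is convenient, though not essential, first to reduce to cabling a single component: granting that case, one cables the components of $L$ one at a time, applying it to each intermediate link, which satisfies the categorified wrapping conjecture by the induction.

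For the nonvanishing I would analyze $\CAKh(D')$ in annular grading $mw$. As $mw$ is maximal, every essential circle in a state contributing to this grading carries the generator of annular degree $+1$, and on such labels the annular differential is essentially trivial: merging an essential circle with a trivial one, or splitting a trivial circle off an essential one, preserves the label; merging two essential circles is zero; and splitting a trivial circle into two essential ones does not arise, since from a state with the maximal number of essential circles no move can increase that number. Hence the annular-degree-$mw$ part of $\CAKh(D')$ is the subquotient complex spanned by the states of $D'$ carrying the full number $mw$ of essential circles, with differential only affecting the trivial-circle labels. I would then identify these states: away from the splicing regions they are the $m$-cables of the states of $D$ realizing $w$ essential circles (which exist, since $\CAKh(D)$ has nonzero homology in annular degree $w$), and inside each region they are the finitely many states of the tangle diagram $T_i$ that leave its $m$ through-strands separate. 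With respect to this product of choices the top-grading complex should factor, at worst up to a filtration, as the tensor product of the annular-degree-$w$ part of $\CAKh(D)$ with, over the components of $L$, the top annular grading complex of the pattern $s_i$. Since $\AKh(L)$ is nonzero in annular degree $w$ by hypothesis, and since each $s_i$, being the closure of an $m$-string link together with local closed components, has $\AKh$ nonzero in annular degree $m$ by Grigsby--Ni's verification of the categorified wrapping conjecture for string links (with a locality argument absorbing the closed components), this tensor product is nonzero; hence $\AKh(L_{m,\textbf{s}})$ is nonzero in annular degree $mw$.

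I expect the crux to be the identification in the second paragraph: pinning down exactly which resolutions of the spliced tangles are compatible with the maximal annular grading, and proving that the induced differential on the top-grading subquotient genuinely splits as the asserted tensor product — or, failing that, that the resulting spectral sequence still detects the nonvanishing. Secondary points requiring care are the quantum and homological grading shifts introduced by cabling and by the framing used to splice in the $T_i$; the closed components of the $T_i$, which are inert for the annular grading but interact locally with the trivial circles of the cable, and which can be peeled off using the annular version of the Batson--Seed splitting spectral sequence established here; and, for the single-component reduction, confirming that each partially cabled intermediate link is again of the form to which the statement applies so that the induction closes.
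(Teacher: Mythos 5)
Your reduction of the theorem to a single nonvanishing statement --- $\wrap(L_{m,\textbf{s}})\leq m\cdot\wrap(L)$ from the obvious disk, so it suffices to show $\AKh(L_{m,\textbf{s}})\neq 0$ in annular grading $m\cdot\wrap(L)$ --- is exactly the frame the paper uses. But the engine you propose for the nonvanishing is different from the paper's and, as you yourself flag, has a genuine gap at its central step. You assert that the annular-degree-$mw$ subquotient of $\CAKh(D')$ is spanned by states that are ``$m$-cables of the states of $D$ realizing $w$ essential circles'' and that the induced differential splits as a tensor product of the top-grading complex of $D$ with local complexes for the patterns $s_i$. Neither claim is established, and the first is doubtful as stated: a crossing of $D$ contributes $m^2$ crossings to $D'$, and resolutions of that block which are not cables of either resolution of the original crossing can still leave all $mw$ strands through the meridian joined into essential circles (this is especially clear at crossings of $D$ whose arcs end up on trivial circles in a maximal state). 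Moreover, even granting a factorization, your appeal to Grigsby--Ni gives nonvanishing of $\AKh(s_i)$ in its own top grading, whereas what your argument needs is nonvanishing of the homology of a specific local subquotient complex of the tangle $T_i$; identifying the two is an additional unproven step (it is essentially the sutured-tangle decomposition result quoted in Section 2, but invoking it presupposes the factorization you have not proved). So the proposal is a plausible program, not a proof.

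For comparison, the paper avoids all of this by making the annular Batson--Seed spectral sequence (Theorem~\ref{annularBS}) do the work: choose $m+1$ distinct weights in $\mathbb{C}$, assign $w_1,\ldots,w_m$ to the $m$ string-link components of each satellite and $w_{m+1}$ to everything else. The spectral sequence then converges to the $(k,\ell)$-bigraded $\AKh$ of a split union of $m$ copies of $L$, modified by disjoint unions and connected sums with links in $S^3$. The tensor-product formula for disjoint unions and \cite[Lemma~3.5]{grigsby_sutured_2014} for connected sums show this limit is nonzero in annular grading $m\cdot\wrap(L)$, and the rank inequality coming from the spectral sequence transports that nonvanishing back to $L_{m,\textbf{s}}$. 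You mention the annular Batson--Seed sequence only peripherally (to peel off closed components); if you promote it to the main tool with the weighting above, the hard combinatorial identification you labeled ``the crux'' disappears entirely.
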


\newtheorem*{AKhTHM}{Theorem~\ref{AKhthm}}
\begin{AKhTHM}
Let $L^n$ be the annular link built by vertically stacking $n$ copies of tangle $J$ from Figure~\ref{tangle} and then taking the annular closure. Also, let $L^{n}_{\textbf{m},\textbf{s}}$ be the link built by replacing the $i$-th component of $L^n$ with a link $s_i$ obtained as the closure of a tangle $T_i$ consisting of an $m_i$-string link and possibly additional closed components. Then the categorified wrapping conjecture holds for the cable $L^{n}_{\textbf{m},\textbf{s}}$.
\end{AKhTHM}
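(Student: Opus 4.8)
The plan is to reduce Theorem~\ref{AKhthm} to Theorem~\ref{generalcable} together with a direct computation for the base link $L^n$. Indeed, $L^{n}_{\textbf{m},\textbf{s}}$ is precisely a cable of $L^n$ of the type covered by Theorem~\ref{generalcable}: its $i$-th component is the closure of a tangle consisting of an $m_i$-string link plus additional closed components. The only discrepancy is that Theorem~\ref{generalcable} is stated with a single cabling width $m$ while here the widths $m_i$ may vary with $i$; I would dispose of this either by observing that the proof of Theorem~\ref{generalcable} is insensitive to using a different width on each component, or by applying it one component at a time. Thus everything comes down to proving the categorified wrapping conjecture for $L^n$, i.e.\ that the maximal non-zero annular grading of $\AKh(L^n)$ equals the minimal intersection of $L^n$ with a meridional disk.

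To prove that equality I would run a squeeze argument. Let $D_n$ be the diagram obtained by stacking $n$ copies of $J$ and taking the annular closure, and let $w_n$ be the number of strands of $D_n$ meeting a fixed cutting arc of the annulus; then the wrapping number of $L^n$ is at most $w_n$. On the other hand, for any annular link the maximal non-zero annular grading of $\AKh$ is at most the wrapping number, since the annular grading of a generator is bounded in absolute value by the number of essential circles in its resolution, which in turn is bounded by the number of strands crossing any cutting arc. Consequently it suffices to produce one non-zero class of $\AKh(L^n)$ in annular grading $w_n$: this immediately forces the wrapping number, the maximal annular grading, and $w_n$ to coincide, which proves the conjecture for $L^n$ and, since $w_n \to \infty$ as $n \to \infty$, records the advertised growth.

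The core of the argument is the construction of that extremal class. I would pass to the subquotient complex of the Khovanov cube of $D_n$ living in annular grading $w_n$: only resolutions of $D_n$ carrying exactly $w_n$ essential circles contribute, and the point of choosing $J$ as in Figure~\ref{tangle} should be that these form a small, explicitly describable region of the cube (obtained by resolving the ``turnbacks'' of the stacked diagram in the unique way that creates essential circles), on which the annular-grading-preserving differential is governed only by the trivial circles, the essential circles playing a passive weight-$(+1)$ role. It then remains to see that this restricted complex has non-vanishing homology. The cleanest route is to check that $D_n$ is $\pm$-adequately wrapped in the sense of Hoste--Przytycki, so that an adequacy argument forces the extreme-annular-grading homology to survive, exactly as $+$-adequacy of an ordinary diagram forces its extreme-quantum-grading Khovanov homology to be non-zero; failing that, one can identify the restricted complex by hand from the explicit tangle $J$.

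The step I expect to be the main obstacle is exactly this last one: correctly identifying which resolutions of the stacked diagram achieve $w_n$ essential circles, and then verifying that the associated piece of $\CAKh(D_n)$ has non-trivial homology rather than being entirely cancelled by the differential. The remaining ingredients — the reduction through Theorem~\ref{generalcable}, the general upper bound on the annular grading, and the bookkeeping of the squeeze — are routine. Once the extremal class is established, the theorem follows by combining it with Theorem~\ref{generalcable} as above.
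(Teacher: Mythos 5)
There is a genuine gap, and it is exactly the one the paper flags in the remark following Theorem~\ref{AKhthm}: this statement does \emph{not} reduce to Theorem~\ref{generalcable}, because the cabling widths $m_i$ are allowed to vary with $i$, and neither of your proposed fixes repairs this. ``Applying it one component at a time'' does not fit the hypotheses of Theorem~\ref{generalcable}, which requires every component to be replaced by a closure of an $m$-string link for a single $m$; cabling one component while leaving the others alone is already a non-uniform cabling, and you cannot reinterpret an $m_i$-string-link closure as a $p$-string-link closure with extra closed components for $p<m_i$, since the excess strands wind around the companion solid torus and are not closed in $D^2\times I$. Nor is the proof of Theorem~\ref{generalcable} ``insensitive'' to varying widths: its splitting uses $m$ distinct weights to peel off $m$ parallel copies of $L$, and with varying widths you can only peel off $p=\min_i m_i$ copies, which yields non-vanishing of $\AKh$ in $k$-grading $2p$ only. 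Matching that against the wrapping number then requires a meridional disk meeting $L^{n}_{\textbf{m},\textbf{s}}$ in exactly $2p$ points, i.e.\ a disk meeting $L^n$ twice \emph{on the single component where the minimum is achieved}. That is a special geometric feature of the clasp-chain $L^n$ built from $J$, not a formal consequence of $\wrap(L^n)=2$ (for a general $L$ of wrapping number $2$ whose minimal disk meets two components $i\neq j$, the cable's wrapping number is only bounded by $m_i+m_j$, which need not equal $2\min_k m_k$). The paper's proof is precisely this modified splitting: weight the first $p$ strands of each satellite by $w_1,\dots,w_p$, lump all remaining strands and closed components into a final weight class, apply Theorem~\ref{annularBS} and the rank inequality to get non-vanishing in grading $2p$, and pair this with the disk meeting the cable in $2p$ points.

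Your treatment of the base link $L^n$ also contains a factual error: the number $w_n$ of strands of the stacked diagram meeting a radial cutting arc is $2$ for every $n$ (the copies of $J$ are stacked in the annular direction, so the cut sees only the two strands of one tangle), so $w_n$ does not tend to infinity; the growth advertised in the corollaries comes from the cabling parameter, not from $n$. The squeeze itself is still the right shape with $w_n=2$, and your proposed adequacy/extremal-generator argument for producing a class in grading $2$ is viable --- it is essentially what Section~\ref{KBSection} does for Theorem~\ref{KBthm} --- but the paper instead quotes the spectral sequence to annular instanton Floer homology and its Thurston-norm detection (Xie, Xie--Zhang) to get $\AKh(L^n)\neq 0$ in grading $2$ for free. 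In summary: the base case can be patched along either route, but the cabling step as you describe it does not go through and needs the minimum-weight splitting argument together with the specific disk for $L^n$.
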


\newtheorem*{AKhBraid}{Theorem~\ref{AKhBraidthm}}
\begin{AKhBraid}
Let $L^n$ be the annular link built by vertically stacking $n$ copies of tangle $J$ from Figure~\ref{tangle} and then taking the annular closure. Also, let $L^{n}_{m,\beta_1, \ldots, \beta_n}$ be the link built by replacing the $i$-th component of $L^n$ with the closure of the $m$-braid $\beta_i$. Then the categorified wrapping conjecture holds for the cable $L^{n}_{m,\beta_1, \ldots, \beta_n}$.
\end{AKhBraid}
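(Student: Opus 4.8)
The plan is to deduce Theorem~\ref{AKhBraidthm} as a special case of Theorem~\ref{AKhthm}. The key observation is that the closure of an $m$-braid $\beta_i$ is precisely the annular closure of a tangle $T_i$ consisting of an $m$-string link (namely $\beta_i$ itself, viewed as a pure or non-pure $m$-string link by reading the braid from bottom to top) together with no additional closed components. In the language of Theorem~\ref{AKhthm}, one sets $\mathbf{m} = (m, m, \ldots, m)$ and takes each $T_i = \beta_i$. Thus $L^{n}_{m,\beta_1,\ldots,\beta_n}$ is literally an instance of the cable $L^{n}_{\mathbf{m},\mathbf{s}}$, and the conclusion follows immediately.

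Before invoking Theorem~\ref{AKhthm}, I would first confirm its hypothesis applies: namely that the categorified wrapping conjecture holds for the base link $L^n$ obtained by stacking $n$ copies of the tangle $J$ from Figure~\ref{tangle} and taking the annular closure. This is exactly what is established in the proof of Theorem~\ref{AKhthm} itself (via an argument computing $\AKh(L^n)$ and checking its top non-zero annular grading equals the meridional disk intersection number of $L^n$), so no new work is required here — I would simply cite that portion of the argument. The only point needing a sentence of justification is that each braid closure genuinely has the tangle form required: an $m$-braid has exactly $m$ strands crossing any meridional disk of the solid torus in which we take its closure, and it contributes no split closed components, so it qualifies as ``an $m$-string link and possibly additional closed components'' with the ``possibly additional'' part being empty.

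There is essentially no obstacle; the content is entirely contained in Theorem~\ref{AKhthm}. The one place to be slightly careful is bookkeeping with the minimal meridional intersection number of the resulting cable: replacing the $i$-th component of $L^n$ by the closure of $\beta_i$ multiplies (in the relevant sense tracked in the proof of Theorem~\ref{AKhthm}) the contribution of that component to the meridional intersection count by $m$, and one must check this matches the top annular grading predicted by the annular Khovanov homology computation. But this matching is precisely the content of Theorem~\ref{AKhthm}, so I would phrase the proof as: ``The closure of an $m$-braid is the closure of an $m$-string link with no additional closed components, so $L^{n}_{m,\beta_1,\ldots,\beta_n}$ is a link of the form $L^{n}_{\mathbf{m},\mathbf{s}}$ with $\mathbf{m} = (m,\ldots,m)$; the result is therefore immediate from Theorem~\ref{AKhthm}.'' This is a two-line proof, and the forward-looking expectation is that the referee will want nothing more than this reduction spelled out explicitly.
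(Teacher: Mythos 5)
Your reduction is mathematically sound: a braid $\beta_i$ is an $m$-string link with no additional closed components, so $L^{n}_{m,\beta_1,\ldots,\beta_n}$ is literally the cable $L^{n}_{\mathbf{m},\mathbf{s}}$ with $\mathbf{m}=(m,\ldots,m)$ and $T_i=\beta_i$, and Theorem~\ref{AKhthm} (whose proof is independent of Theorem~\ref{AKhBraidthm}, so there is no circularity) applies verbatim. One small correction of emphasis: Theorem~\ref{AKhthm} has no hypothesis to verify about $L^n$ --- it is stated unconditionally for the links built from the tangle $J$ --- so the only content of your proof is the observation that braid closures are string-link closures. However, this is not the route the paper takes. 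The paper deliberately postpones Theorems~\ref{AKhBraidthm} and~\ref{AKhWhThm} to Section~\ref{KBSection} and deduces them from the decategorified statement, Theorem~\ref{KBthm}: one shows diagrammatically that the all-$1$'s resolution of $L^{n}_{m,1,\ldots,1}$ has a unique generator in its quantum grading sitting in annular grading $2m$ (a minus-adequately wrapped diagram), and then invokes Hoste--Przytycki's Lemma~10 to insert the braids $\beta_i$; since the Kauffman bracket is the graded Euler characteristic of $\AKh$, non-vanishing of the bracket in the top annular grading immediately gives the categorified statement. The trade-off is clear: your argument is a two-line specialization but inherits all the machinery behind Theorem~\ref{AKhthm} (the annular Batson--Seed spectral sequence plus Xie and Xie--Zhang's instanton Floer input) and proves only the categorified conclusion, whereas the paper's argument is elementary and diagrammatic and simultaneously establishes the stronger decategorified wrapping conjecture for these links --- something your reduction cannot give, and which is the reason the braid case is singled out as a separate theorem at all (the adequately-wrapped/Lemma~10 technique works for braid insertions but not for arbitrary string-link satellites). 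Both proofs are valid; yours is shorter, the paper's yields more.
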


\newtheorem*{AKhWh}{Theorem~\ref{AKhWhThm}}
\begin{AKhWh}

Let $K^n$ be the iterated Whitehead double of the annular link $L^1$ obtained as the annular closure of the tangle $J$ from Figure~\ref{tangle} and let $K^n_{m,\beta}$ be the annular knot obtained by replacing $K^n$ with the closure of the $m$-braid $\beta$. Then the categorified wrapping conjecture holds for $K^n_{m,\beta}$.

\end{AKhWh}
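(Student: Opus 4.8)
\section*{Proof proposal for Theorem~\ref{AKhWhThm}}

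The plan is to strip off the braiding with the cabling theorem and then handle the iterated Whitehead double by induction on $n$, using the annular Khovanov skein exact triangle at each doubling step. Since $K^n_{m,\beta}$ is obtained from $K^n$ by replacing its component with the closure of the $m$-braid $\beta$, and a braid is in particular an $m$-string link, Theorem~\ref{generalcable} reduces the claim to verifying the categorified wrapping conjecture for $K^n$ itself; $\beta$ then plays no further role. The base case of the induction is $K^0=L^1$, the annular closure of $J$, which is the $n=1$ instance of $L^n$ in Theorem~\ref{AKhthm}: taking the trivial cable there (each $m_i=1$) gives the categorified wrapping conjecture for $L^1$.

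For the inductive step, write $w:=n_{K^j}$ for the minimal meridional intersection number and assume $\AKh(K^j)$ is supported in annular gradings at most $w$ and is nonzero in annular grading $w$; I want the same for $K^{j+1}=Wh(K^j)$ with $w$ replaced by $2w$. Fix a diagram of $K^j$ meeting a chosen meridional disk in $w$ points and build the standard diagram of the Whitehead double from it: two parallel copies of $K^j$, joined into one component, together with a clasp (two crossings $c_1,c_2$), with all of this modification confined to a ball disjoint from the meridional disk. This diagram meets the meridional disk in $2w$ points, so $n_{K^{j+1}}\leq 2w$ and, by the easy direction of the wrapping conjecture, $\AKh(K^{j+1})$ vanishes above annular grading $2w$. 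Resolving $c_1$ and $c_2$ and applying the annular Khovanov skein exact triangle twice exhibits $\CAKh(K^{j+1})$ as an iterated mapping cone of the $\CAKh$ of the four resolutions of the clasp; the crucial point is that the skein maps preserve the annular grading with no shift in it, so annular grading $2w$ can be analysed resolution by resolution.

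Exactly one of the four resolutions should be, up to isotopy, a cable of $K^j$ by a $2$-string link — a blackboard $2$-cable, possibly with a few full twists — with minimal meridional intersection number $2w$ (its two components are each isotopic in $A\times I$ to $K^j$ and are disjoint, and since every meridional disk is ambient isotopic to the chosen one, each component meets it at least $w$ times, forcing $n\geq 2w$); the other three resolutions ``reconnect'' the two strands and have minimal meridional intersection strictly less than $2w$, hence vanishing annular Khovanov homology in grading $2w$. By the inductive hypothesis and Theorem~\ref{generalcable}, the categorified wrapping conjecture holds for that $2$-string-link cable, so its annular Khovanov homology is nonzero in grading $2w$. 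Since the other three resolutions contribute nothing there, the degree-$2w$ part of the iterated mapping cone is just the degree-$2w$ part of $\CAKh$ of the $2$-cable, with zero differential to and from the rest, so $\AKh(K^{j+1})$ is nonzero in annular grading $2w$. Combined with the vanishing above $2w$, the top nonzero annular grading of $\AKh(K^{j+1})$ is exactly $2w$; the easy direction then forces $n_{K^{j+1}}\geq 2w$, whence $n_{K^{j+1}}=2w$ and the categorified wrapping conjecture holds for $K^{j+1}$, completing the induction and the proof.

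The main obstacle is the geometric bookkeeping inside the inductive step: one must choose the Whitehead-double diagram so that the clasp genuinely lies in a ball meeting no meridional disk, then pin down which of the four clasp resolutions is the $2$-string-link cable with $n=2w$ and show that the remaining three strictly drop the minimal meridional intersection — in particular ruling out that a reconnecting resolution could still wrap $2w$. A secondary point to nail down is that the annular skein exact triangle respects the annular grading on the nose, so that vanishing of the three subsidiary resolutions in grading $2w$ really does isolate the cable's contribution there; this is the annular refinement of the unoriented Khovanov skein triangle and only needs to be recorded in a form that tracks the annular grading.
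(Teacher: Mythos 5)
Your reduction of $K^n_{m,\beta}$ to $K^n$ via Theorem~\ref{generalcable} is fine (a braid is an $m$-string link), and the base case is fine, but the inductive step contains a fatal miscount of the clasp resolutions. The clasp tangle is a rotated $\sigma_1^{\pm 2}$, so in $TL_2$ its four resolutions consist of \emph{one} copy of the cap--cup element $e_1$ and \emph{three} copies of the identity tangle (two plain, one with an extra free circle). In the Whitehead pattern it is the identity tangle whose closure is the blackboard $2$-cable and the cap--cup whose closure retracts into a ball. So the situation is exactly opposite to what you assert: three of the four resolutions of $K^{j+1}$ are, up to a split unknot, the $2$-cable of $K^j$ with wrapping number $2w$, and only one resolution has wrapping number less than $2w$. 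Consequently the annular-degree-$2w$ part of your iterated mapping cone is not a single group with zero differentials; it is a three-term complex whose terms are $\AKh_{2w}$ of the $2$-cable (twice) and of the $2$-cable union an unknot, connected by merge/split maps for the extra circle. These maps are generically nonzero, and the skein exact triangle gives no control over whether anything survives in homology — ruling out exactly this cancellation is the hard part of the problem, and it is not addressed. (Your subsidiary claims — that wrapping number $<2w$ forces vanishing of $\AKh$ in grading $2w$, and that the annular skein maps preserve the $k$-grading — are correct; the gap is solely in which resolutions contribute at the top.)

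For comparison, the paper avoids the skein triangle entirely: it proves the stronger decategorified statement (Theorem~\ref{KBWhThm}) by exhibiting a minus-adequately wrapped diagram, i.e.\ showing that in the all-$1$'s resolution of an explicit diagram the all-``$+$'' generator is the unique generator in its quantum grading and lies in the top annular grading, so the Kauffman bracket — and hence $\AKh$ — is nonzero there. If you want to salvage a skein-theoretic induction you would need to compute the differentials among the three cable-type resolutions in the top annular grading, which amounts to redoing that diagrammatic computation.
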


\newtheorem*{KBTHM}{Theorem~\ref{KBthm}}
\begin{KBTHM}
Let $L^n$ be the annular link built by vertically stacking $n$ copies of tangle $J$ from Figure~\ref{tangle} and then taking the annular closure. Also, let $L^{n}_{m,\beta_1, \ldots, \beta_n}$ be the link built by replacing the $i$-th component of $L^n$ with the closure of the $m$-braid $\beta_i$. Then the wrapping conjecture holds for the cable $L^{n}_{m,\beta_1, \ldots, \beta_n}$.
\end{KBTHM}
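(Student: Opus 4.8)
The plan is to deduce the statement from its categorified counterpart, Theorem~\ref{AKhBraidthm}, by ruling out cancellation in the top annular grading. Write $N$ for the minimal intersection of the link $L^{n}_{m,\beta_1,\dots,\beta_n}$ with a meridional disk. One inequality of the wrapping conjecture is the standard diagrammatic bound: if $D$ is an annular diagram with a meridional arc $\mu$ disjoint from its crossings, then for any Kauffman state each essential circle meets $\mu$ in at least one point while all state circles together meet $\mu$ in exactly $|D\cap\mu|$ points, so the number of essential circles --- hence the annular degree of the corresponding term of $\langle L\rangle$ --- is at most $|D\cap\mu|$; minimizing over diagrams gives that the maximal annular degree of $\langle L^{n}_{m,\beta_1,\dots,\beta_n}\rangle$ is at most $N$. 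It remains to show that the coefficient of the top annular power in $\langle L^{n}_{m,\beta_1,\dots,\beta_n}\rangle$ is a nonzero element of $\mathbb{Z}[q^{\pm1}]$.

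Since the Kauffman skein bracket is the graded Euler characteristic of annular Khovanov homology, that coefficient equals $\sum_{i,j}(-1)^{i}q^{j}\dim\AKh^{i,j}_{N}(L^{n}_{m,\beta_1,\dots,\beta_n})$, the graded Euler characteristic of the top annular grading summand $\AKh_{N}$. By Theorem~\ref{AKhBraidthm} this summand is nonzero, so it suffices to show that no cancellation occurs, and for that I would prove that $\AKh_{N}$ of these links is supported in homological gradings of a single parity. The mechanism is that in annular degree $N$ only the Kauffman states with the maximal number $N$ of essential circles contribute, and at such a state the relevant part of the chain group is $W_{+}^{\otimes N}$ --- every essential circle labelled by the top weight vector $w_{+}$ --- tensored with the modules of the trivial circles; a Khovanov merge of two essential circles sends $w_{+}\otimes w_{+}$ to $0$ for annular-degree reasons, so the differential on the annular-degree-$N$ part of the complex uses only the remaining edges and $\AKh_{N}$ is the homology of an explicit small subcomplex.

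For the braid cables at hand I would argue that this small complex is homologically concentrated. Within the $i$-th cable, resolving any crossing of $\beta_{i}$ the non-oriented way merges two adjacent strands and strictly lowers the essential-circle count, so the only state of that cable contributing in top annular degree is its oriented (Seifert) state, which occupies a single homological grading; feeding this into the explicit description of the annular-degree-$N$ subcomplex used to prove Theorems~\ref{generalcable} and~\ref{AKhthm}, one finds that the annular-degree-$N$ part of the complex of $L^{n}_{m,\beta_1,\dots,\beta_n}$ lies in a single homological grading. Its graded Euler characteristic is then, up to sign, the graded Poincar\'e polynomial of $\AKh_{N}$ --- a nonzero Laurent polynomial in $q$ --- and combined with the upper bound this shows the maximal annular degree of $\langle L^{n}_{m,\beta_1,\dots,\beta_n}\rangle$ equals $N$, which is the wrapping conjecture for these cables.

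The main obstacle is exactly this no-cancellation step, and it is where the braid hypothesis is essential: a general $m$-string link has more than one oriented resolution, so for the arbitrary cables of Theorem~\ref{AKhthm} the annular-degree-$N$ subcomplex can spread over several homological gradings with genuine differentials and the Euler characteristic could conceivably cancel --- which is presumably why the decategorified statement is claimed only for braid cables. The concrete computational core is the base case: one must check from Figure~\ref{tangle} that stacking $n$ copies of $J$ and taking the annular closure yields a diagram whose meridional intersection number agrees with the maximal essential-circle count among its states and whose annular-degree-$N$ subcomplex is homologically concentrated, after which the braid-cabling step propagates the conclusion. (One can also sidestep annular Khovanov homology and compute $\langle L^{n}\rangle$ directly in the annular skein module $\mathbb{Z}[A^{\pm1}][z]$, tracking the leading $z$-term: since each $\beta_{i}$ contributes its Seifert state as a monomial in $A$, cabling multiplies that leading term by an invertible scalar and a fixed power of $z$, introducing no cancellation.)
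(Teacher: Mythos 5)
Your overall strategy --- establish the easy diagrammatic upper bound, then rule out cancellation in the graded Euler characteristic of the top annular summand --- has the right shape, but the no-cancellation step is where all the content lives and your mechanism for it does not go through. You propose to show that the entire annular-degree-$N$ subcomplex is concentrated in a single homological grading. There is no reason for this to hold: many Kauffman states of the cabled diagram can realize the maximal essential-circle count $N=2m$ in different homological degrees. Your ``only the Seifert state contributes'' observation applies to the crossings of the inserted braids $\beta_i$, but says nothing about the crossings created by $m$-cabling the tangle $J$, which is where most of the crossings are. The paper uses a different and weaker no-cancellation mechanism: it computes the all-$1$'s resolution of the cabled tangle explicitly, checks that every band reverting a $1$-resolution to a $0$-resolution joins two \emph{distinct} circles (adequacy), and concludes that the all-``$+$'' generator of the all-$1$'s state is the unique generator of the whole complex in its quantum grading; since that state has $2m$ essential circles, the bracket is forced to be nonzero in annular degree $2m$ regardless of what happens in other quantum gradings. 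In Hoste--Przytycki's language this exhibits a minus-adequately wrapped diagram for $L^{n}_{m,1,\ldots,1}$, and their Lemma~10 then inserts the arbitrary braids $\beta_i$ --- a step your proposal essentially rediscovers but does not justify (your parenthetical skein-module remark that cabling ``introduces no cancellation'' is precisely the assertion at stake, not an argument for it).

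Two further problems. First, invoking Theorem~\ref{AKhBraidthm} for nonvanishing is circular: in the paper that theorem is \emph{deduced from} this one, and your argument does not independently establish that the top annular summand of $\AKh$ is nonzero unless the (unproved) chain-level concentration holds. Second, you explicitly defer ``the concrete computational core'' --- the analysis of the all-$1$'s resolution of the cable of $J$ --- to a check you do not perform; that check is not a routine verification but is the entirety of Step~1 of the paper's proof, and without it nothing in annular degree $N$ has been shown to survive in the Euler characteristic.
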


\newtheorem*{KBWh}{Theorem~\ref{KBWhThm}}
\begin{KBWh}

Let $K^n$ be the iterated Whitehead double of the annular link $L^1$ obtained as the annular closure of the tangle $J$ from Figure~\ref{tangle} and let $K^n_{m,\beta}$ be the annular knot obtained by replacing $K^n$ with the closure of the $m$-braid $\beta$. Then the wrapping conjecture holds for $K^n_{m,\beta}$.

\end{KBWh}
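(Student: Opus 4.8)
The plan is to derive Theorem~\ref{KBWhThm} from its categorified counterpart, Theorem~\ref{AKhWhThm}, by controlling the Euler characteristic in the top annular grading. Recall that the Kauffman skein bracket of an annular link $L$ is the graded Euler characteristic of $\AKh(L)$ computed separately in each annular grading: if $\AKh_{k}(L)$ denotes the direct summand in annular grading $k$, then the coefficient of $a^{k}$ in $\langle L\rangle$ is $\sum_{i,j}(-1)^{i}q^{j}\dim_{\bbQ}\AKh^{i,j}_{k}(L;\bbQ)$. Let $w$ be the minimal geometric intersection number of $K^{n}_{m,\beta}$ with a meridional disk. Theorem~\ref{AKhWhThm} tells us that $\AKh_{k}(K^{n}_{m,\beta})=0$ for $k>w$ and that $\AKh_{w}(K^{n}_{m,\beta})\neq 0$. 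Consequently the maximal non-zero annular degree of $\langle K^{n}_{m,\beta}\rangle$ is at most $w$, with equality if and only if the graded Euler characteristic of the top summand $\AKh_{w}(K^{n}_{m,\beta})$ is non-zero as a Laurent polynomial in $q$.

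To rule out cancellation I would show that $\AKh_{w}(K^{n}_{m,\beta})$ is supported in a single homological grading; then its graded Euler characteristic equals $\pm 1$ times an honest Poincar\'e polynomial in $q$ and hence is non-zero. This concentration should be read off from the proof of Theorem~\ref{AKhWhThm}. The mechanism is that a state of the Bar-Natan/Khovanov cube of resolutions contributes to annular grading $w$ only if its underlying resolution carries the maximal possible number of essential (annularly non-trivial) circles; for a diagram in which the cabling braid $\beta$ is drawn in braid position, replacing any braid crossing by its non-oriented resolution strictly lowers the number of essential circles, so the top annular weight part of the complex is governed by a single resolution of the braid region and therefore sits in one homological degree there. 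Feeding this into the tangle-decomposition and skein long exact sequence arguments that prove the categorified statement, together with a K\"unneth argument for the top annular weight spaces of the pieces, should yield that $\AKh_{w}(K^{n}_{m,\beta})$ is concentrated in a single homological degree---indeed one expects it to be one-dimensional over $\bbQ$.

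The crux, and the main obstacle, is precisely this concentration step. For an arbitrary annular cabling pattern the top annular grading part of $\AKh$ can be spread across homological degrees of both parities, which would allow the Euler characteristic to vanish; this is exactly why Theorems~\ref{KBthm} and~\ref{KBWhThm} are restricted to braid cabling patterns whereas the categorified Theorems~\ref{AKhthm} and~\ref{AKhWhThm} permit arbitrary tangle closures. The work is therefore to verify that, for the specific diagrams obtained by stacking copies of $J$ and iterating the Whitehead-double operation, putting $\beta$ in braid position genuinely pins the top annular weight part of the cube of resolutions to a single homological degree, and that the clasp regions introduced by the Whitehead doubling---which are not braided---do not disturb this: one checks that each such clasp region contributes to annular grading $w$ only through the unique resolution maximizing its essential circle count, so that the homological-degree bookkeeping still collapses to a point.
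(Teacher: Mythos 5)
There is a genuine gap, and also a structural problem with the direction of your reduction. In the paper the logical arrow points the other way: Theorem~\ref{AKhWhThm} is \emph{deduced from} the decategorified statement (non-vanishing of the Euler characteristic in a grading forces non-vanishing of the homology), and its ``proof'' is exactly the diagrammatic argument of Section~\ref{KBSection}. So there is no independent proof of Theorem~\ref{AKhWhThm} from which you could ``read off'' a concentration statement; using it as input here is circular in the context of this paper. More importantly, the concentration claim you lean on --- that the top annular weight summand $\AKh_{w}(K^n_{m,\beta})$ is supported in a single homological degree --- is both unproven and almost certainly false for these links. A generator lies in annular grading $w=2m$ whenever its underlying resolution has (at least) $2m$ essential circles all labelled $v_+$, and the trivial circles are labelled arbitrarily. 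Many resolutions of these diagrams achieve the maximal essential circle count: changing a $1$-resolution at a crossing whose reversion band joins two trivial circles, or a trivial circle to itself, does not alter the essential circle count. These resolutions occur in many homological degrees, so the top annular weight part of the chain complex is spread out, and nothing in your sketch forces the homology to collapse to one degree (nor to be one-dimensional). Your heuristic that ``replacing any braid crossing by its non-oriented resolution strictly lowers the number of essential circles'' addresses only the crossings in the braid region $\beta$, not the crossings coming from the cabled copies of $J$ and the clasps, which is where the spreading happens.

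The argument that actually closes the gap is more modest: instead of controlling the entire top annular summand, isolate the single extremal \emph{quantum} grading. For $K^n_{m,1}$ one checks that in the all-$1$'s resolution every band reverting a $1$-resolution to a $0$-resolution joins two distinct circles (the diagram is minus-adequate), so the all-$+$ generator is the unique generator in its quantum grading; one then observes that this generator sits in annular grading $2m$, which equals the wrap number. That single coefficient of the Kauffman bracket is therefore $\pm 1$, with no cancellation possible. Finally, arbitrary braids $\beta$ are handled not by a direct cube-of-resolutions analysis but by invoking Hoste--Przytycki's Lemma~10, which says that inserting a braid into a minus-adequately wrapped diagram preserves the wrapping property. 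If you want to salvage your Euler-characteristic framing, you should replace ``concentrated in a single homological degree'' by ``has a quantum grading containing exactly one generator,'' which is the adequacy condition, and supply the band/circle count for the iterated Whitehead-double diagrams that verifies it.
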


\begin{figure}

  \centering
    \includegraphics[width=0.3\textwidth]{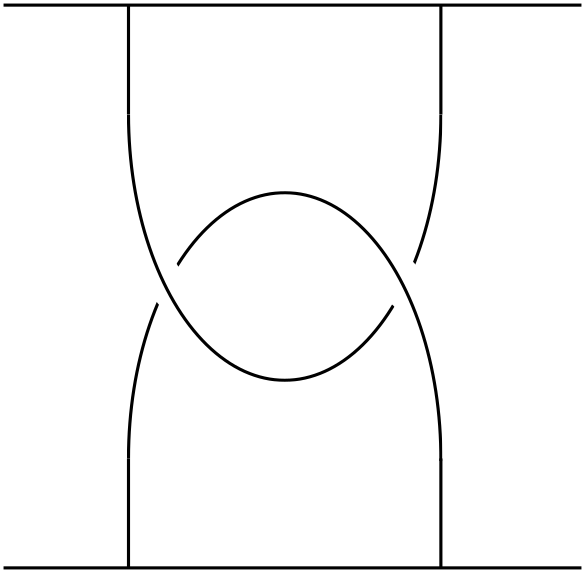}
      \caption{The tangle $J$ we use to construct some annular links.}\label{tangle}
\end{figure}

The links considered in Theorems~\ref{AKhthm},~\ref{AKhBraidthm},~\ref{AKhWhThm},~\ref{KBthm}, and~\ref{KBWhThm} are examples of links where the generalized Thurston norm is much smaller than the minimal intersection of $L$ with a meridional disk. The generalized Thurston norm is no larger than two for any of these links because there is an embedded meridional torus which does not intersect the link. However, for these examples, the minimal intersection with a meridional disk can be made arbitrarily large. This gives rise to a difference between the Kauffman bracket and annular Khovanov homology on the one hand and the multi-variable Alexander polynomial and various annular Floer theories on the other hand.

\begin{cor} 
There is an infinite family of links $L_i$ such that the maximal non-zero annular grading for the annular Khovanov homology of the links $L_i$ grows infinitely large but the maximal non-zero annular grading for annular instanton homology and annular link Floer homology of the links $L_i$ is bounded.
\end{cor}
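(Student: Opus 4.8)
The plan is to take the family $\{L_i\}$ to be the links $L^i$ of Theorem~\ref{AKhthm}, obtained by vertically stacking $i$ copies of the tangle $J$ of Figure~\ref{tangle} and taking the annular closure (with all of the cabling data taken trivial); any of the cabled families appearing in Theorems~\ref{AKhBraidthm}, \ref{AKhWhThm}, or~\ref{KBthm} would do just as well. The statement is then assembled from two inputs: a growing lower bound on the annular Khovanov grading, and a uniform upper bound on the two annular Floer-theoretic gradings.

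For the Khovanov side, Theorem~\ref{AKhthm} says the categorified wrapping conjecture holds for $L^i$, so the maximal non-zero annular grading of $\AKh(L^i)$ is exactly $n_i$, the minimal intersection number of $L^i$ with a meridional disk. As recorded in the discussion preceding the corollary, $n_i$ can be made arbitrarily large: the stacked diagram of $L^i$ (and, at the decategorified level, the Kauffman bracket computation underlying Theorem~\ref{KBthm}) makes it visible that the wrapping number grows with $i$. Hence the maximal non-zero annular Khovanov grading of $L^i$ is unbounded as $i \to \infty$.

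For the Floer side, recall from the same discussion that each $L^i$ is disjoint from an embedded meridional torus in $A \times I$, so the generalized Thurston norm of the meridional class is at most $2$, uniformly in $i$. Now I invoke the relevant norm-detection results. By Xie~\cite{xie_instantons_2018} and Xie--Zhang~\cite{xie_instanton_2019}, the top annular grading in which annular instanton homology is supported is bounded above by the generalized Thurston norm of the meridional class; and by Juh\'asz's detection of the Thurston norm via sutured Floer homology~\cite{juhasz_floer_2008}, applied to the balanced sutured manifold obtained from the complement of $L^i$ in $A \times I$, the analogous bound holds for annular link Floer homology. Consequently the maximal non-zero annular grading of each of these two invariants of $L^i$ is at most $2$ for every $i$, and together with the previous paragraph this establishes the corollary.

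There is not much genuine difficulty here beyond quoting the earlier theorems and these norm-detection results; the two points that need care are verifying that the chosen family has unbounded wrapping number --- which follows from the explicit diagrams together with the computation proving Theorem~\ref{AKhthm} --- and checking that the results of Xie, Xie--Zhang, and Juh\'asz are applied to the homology class that corresponds to the annular grading, so that the disjoint meridional torus really does produce the uniform bound of $2$. The main conceptual content has already been expended in proving Theorem~\ref{AKhthm}.
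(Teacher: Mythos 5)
There is a genuine error in your choice of family. You take $L_i = L^i$, the annular closure of $i$ stacked copies of $J$ with trivial cabling, and assert that its wrapping number grows with $i$. It does not: each copy of $J$ is a $2$-strand tangle, so the vertical stack of $i$ copies is still a $(2,2)$-tangle and its annular closure meets a meridional disk in exactly $2$ points, for every $i$. This is exactly what the paper's own computations show --- in the proof of Theorem~\ref{AKhthm} the disk meets $L^{n}_{\mathbf{m},\mathbf{s}}$ in $2p$ points with $p=\min m_i$ (so $2$ points when all $m_i=1$), and in the proof of Theorem~\ref{KBthm} the top annular grading of $L^n_{m,1,\ldots,1}$ is computed to be $2m$, independent of $n$. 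So for your family the maximal non-zero annular Khovanov grading is identically $2$ and nothing grows; the corollary is not established. The parameter that must grow is the cable parameter $m$, not the number $n$ of stacked tangles: the intended family is something like $L_i = L^{1}_{i,1}$ (or any of the cabled families with the number of cabling strands tending to infinity), whose maximal non-zero annular grading is $2i$ by Theorems~\ref{AKhthm}/\ref{KBthm}.

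With that correction, the rest of your argument is the paper's (implicit) one and goes through: each cable lies in a tubular neighborhood of the underlying $L^n$, which is disjoint from an embedded meridional torus, so the generalized Thurston norm of the meridional class stays uniformly bounded as $i$ grows; the norm-detection/bounding results of Xie and Xie--Zhang for annular instanton homology and of Juh\'asz for the sutured Floer homology of the annular complement then cap the top non-zero annular grading of both Floer theories independently of $i$. Just make sure the disjointness of the torus is asserted for the cabled links $L_i$ themselves (shrink the tubular neighborhood of $L^n$ containing the cable until it misses the torus), since those, not the uncabled $L^n$, are the links to which the Floer bounds must be applied.
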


\begin{cor}
There is an infinite family of links $L_i$ such that the maximal non-zero annular grading for the Kauffman bracket of the links $L_i$ grows infinitely large but the maximal non-zero annular grading for the multi-variable Alexander polynomial  of the links $L_i$ is bounded.
\end{cor}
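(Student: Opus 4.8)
The plan is to take $\{L_i\}$ to be the family governed by Theorem~\ref{KBthm} --- for definiteness the annular links $L^i$ obtained by vertically stacking $i$ copies of the tangle $J$ and taking the annular closure (the special case $m=1$ with trivial braids), or the iterated Whitehead doubles $K^i$ of Theorem~\ref{KBWhThm} --- and to play the divergence of the wrapping number against the boundedness of the generalized Thurston norm.

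For the Kauffman bracket, Theorem~\ref{KBthm} (resp.\ Theorem~\ref{KBWhThm}) says the wrapping conjecture holds for $L_i$, so the maximal non-zero annular grading of its Kauffman skein bracket equals $n_i$, the minimal geometric intersection of $L_i$ with a meridional disk of $A\times I$. As noted in the discussion following the theorem statements, $n_i\to\infty$ over the family (for the iterated Whitehead doubles one sees directly that the wrapping number at least doubles at each step, so it grows like $2^i$). Hence the maximal non-zero annular grading of the Kauffman bracket of $L_i$ grows without bound; this is the only use of Theorem~\ref{KBthm}.

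For the multi-variable Alexander polynomial, view $L_i\subset A\times I$ as the link $L_i\cup A\subset S^3$ with $A$ the core of the complementary solid torus, so that the annular grading records the exponent of the meridional variable $w$ of $A$ in $\Delta_{L_i\cup A}$. Write $M_i=(A\times I)\setminus N(L_i)=S^3\setminus N(L_i\cup A)$ and let $\phi_i\in H^1(M_i;\Z)$ be the class dual to a meridional surface --- the class recording the winding number about $A$. Since $b_1(M_i)\ge 2$, McMullen's inequality gives $\|\phi_i\|_A\le\|\phi_i\|_T$, and $\|\phi_i\|_T$ is exactly the generalized Thurston norm of $L_i$, which is at most $2$ for every $i$ because $L_i$ is disjoint from an incompressible meridional torus in $A\times I$ (for $K^i$, the boundary of a tubular neighborhood of $K^{i-1}$). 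Therefore the $w$-breadth of $\Delta_{L_i\cup A}$, which is the Alexander norm $\|\phi_i\|_A$, is at most $2$, so the maximal non-zero annular grading of the multi-variable Alexander polynomial of $L_i$ is bounded. (Alternatively this follows from the preceding corollary, since annular link Floer homology detects the generalized Thurston norm and its graded Euler characteristic recovers $\Delta_{L_i\cup A}$ up to units.)

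Combining the two halves produces the family required by the corollary. The translations between gradings and topology are routine --- Theorem~\ref{KBthm} on one side, McMullen's norm inequality (or Ozsv\'ath--Szab\'o/Juh\'asz Thurston-norm detection) on the other. The main obstacle is instead the two concrete topological facts about the family: that the wrapping number of $L_i$ diverges, and that an incompressible meridional torus disjoint from $L_i$ persists for every $i$, pinning down the Thurston norm. Both are extracted from the explicit structure of $L^n$ (resp.\ $K^n$) built from the tangle $J$: the $J$-pattern is geometrically essential, so stacking or iterating drives the wrapping number up, yet homologically controlled, so the complementary torus remains meridionally essential.
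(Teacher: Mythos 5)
Your overall strategy is exactly the paper's (implicit) one: use Theorem~\ref{KBthm}/\ref{KBWhThm} to identify the top Kauffman-bracket annular degree with the wrapping number, and bound the $w$-degree of the multi-variable Alexander polynomial by the generalized Thurston norm (via McMullen's Alexander norm inequality or the Floer-theoretic detection results), which stays $\leq 2$ because of the essential meridional torus disjoint from the link. That half of your argument is fine.

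However, your ``for definiteness'' choice of family is wrong. The links $L^i$ obtained by stacking $i$ copies of $J$ with $m=1$ and trivial braids have wrapping number $2$ for \emph{every} $i$: the tangle $J$ is a $2$-balanced tangle, so stacking any number of copies still yields a tangle meeting each horizontal level in two points, and the annular closure meets a meridional disk in exactly $2$ points. (This is visible in the paper's own computation in the proof of Theorem~\ref{KBthm}, where the all-$1$'s resolution of $L^n_{m,1,\ldots,1}$ has exactly $2m$ essential circles, hence $\wrap(L^n_{m,1,\ldots,1})=2m$ independent of $n$; it is also why the proof of Theorem~\ref{AKhthm} only claims $\AKh(L^n)\neq 0$ in $k$-grading $2$.) So for that family the top annular degree of the Kauffman bracket is constantly $2$ and does not diverge. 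The parameter that drives the wrapping number to infinity is the cable width $m$, not the number of stacked copies $n$: the correct choice from Theorem~\ref{KBthm} is, e.g., $L_i = L^{1}_{i,1}$ with wrapping number $2i$. Your alternative family, the iterated Whitehead doubles $K^i$ of Theorem~\ref{KBWhThm} with wrapping number $2^i$, does work as stated, so the proof is salvageable by simply deleting the first-named family; but as written it asserts a false growth claim for the primary example.
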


Recalling the relationship between annular Khovanov homology and knot Floer homology using double branched covers~\cite{roberts_knot_2013, grigsby_khovanov_2010}, we also have the following differences.

\begin{cor}
There is an infinite family of links $L_i$ such that the maximal non-zero annular grading for the annular Khovanov homology of the links $L_i$ grows infinitely large but the maximal non-zero Alexander grading for an associated link in $\Sigma(L_i)$ is bounded.

\end{cor}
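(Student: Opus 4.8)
The plan is to combine the annular Khovanov computation of Theorem~\ref{AKhthm} with the double-branched-cover picture of Roberts and Grigsby~\cite{roberts_knot_2013, grigsby_khovanov_2010} and a Thurston-norm argument on the Floer side. First I would fix $L_i := L^i$, the annular closure of $i$ vertically stacked copies of the tangle $J$ (or any single fixed cable $L^i_{m,\beta}$, so that Theorem~\ref{AKhthm} applies verbatim). Because the categorified wrapping conjecture holds for these links, the maximal non-zero annular grading of $\AKh(L_i)$ equals the minimal geometric intersection number $n_i$ of $L_i$ with a meridional disk; stacking copies of $J$ forces $n_i \to \infty$, so the annular Khovanov grading is unbounded along the family.

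Next I would set up the ``associated link.'' Realize $A \times I$ as $S^3 \setminus \nu(B)$ for an unknotted axis $B$, let $\Sigma(L_i)$ denote the double branched cover of $S^3$ along $L_i$, and let $\widetilde B \subset \Sigma(L_i)$ be the preimage of $B$; this $\widetilde B$ is the link whose knot Floer homology receives the Roberts--Grigsby spectral sequence from $\AKh(L_i)$, with the annular grading converging to a fixed scalar multiple of the Alexander grading. Note that the spectral sequence alone only bounds the Alexander support \emph{by} the annular support, which is the wrong inequality for us. The key observation is instead that the complement $\Sigma(L_i) \setminus \nu(\widetilde B)$ is precisely the double branched cover of $A\times I$ along $L_i$, and this complement has small Thurston norm.

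To make that precise I would invoke the Thurston-norm detection theorem for link Floer homology of Ozsv\'ath--Szab\'o and Ni: after checking that $\widetilde B$ is (rationally) null-homologous, the top Alexander grading in which $\widehat{\HFK}(\Sigma(L_i), \widetilde B)$ is non-zero is governed by the Thurston norm of $\Sigma(L_i)\setminus\nu(\widetilde B)$ in the class dual to the meridian of $\widetilde B$. By construction there is an embedded essential meridional torus $T \subset A\times I$ with $T \cap L_i = \emptyset$; its preimage $\widetilde T$ is an embedded surface of genus at most one, disjoint from $\widetilde B$, of complexity independent of $i$, and cutting $\Sigma(L_i)\setminus\nu(\widetilde B)$ along $\widetilde T$ reduces the norm estimate to the bounded, $i$-independent pieces on either side. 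Hence the top Alexander grading of $\widehat{\HFK}(\Sigma(L_i),\widetilde B)$ stays bounded, while by Theorem~\ref{AKhthm} the top annular Khovanov grading tends to infinity, which is the claim.

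The main obstacle is the last step: identifying the correct relative homology class, verifying that $\widetilde B$ may be arranged (rationally) null-homologous --- or else replacing the detection statement by its sutured/relative version --- and confirming that the surface assembled from the lifted meridional torus is genuinely norm-minimizing for that class, rather than merely an incompressible surface in the complement. If a grading-matching comparison between annular link Floer homology of $L_i$ and $\widehat{\HFK}(\Sigma(L_i),\widetilde B)$ is available, one could instead import the bound directly from the previous corollary and skip the Thurston-norm bookkeeping altogether.
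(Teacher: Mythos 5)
Your overall strategy --- combine the growth of the top annular grading from the main theorems with a Thurston-norm bound on the Alexander grading of the lifted axis $\widetilde B\subset\Sigma(L_i)$, using the lift of the meridional surface disjoint from the link --- is exactly the argument the paper intends (the paper states this corollary without a separate proof, relying on the preceding discussion of the meridional torus and the Roberts/Grigsby--Wehrli correspondence between the annular grading and the Alexander grading of $\widetilde B$). Your observation that the spectral sequence gives the inequality in the wrong direction, so that the bound must come from the Floer side via an embedded surface, is correct and worth making explicit. One small simplification: you do not need the \emph{detection} (non-vanishing) direction of the Ozsv\'ath--Szab\'o/Ni/Juh\'asz theorems, nor do you need your lifted surface to be norm-minimizing. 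The corollary only requires an upper bound on the Alexander support, and that follows from the vanishing of $\widehat{\HFK}$ above the Thurston norm together with the fact that \emph{any} embedded surface in the relevant class bounds the norm from above; so the ``main obstacle'' you flag in your last paragraph largely dissolves.

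However, there is a genuine error in your choice of family. You take $L_i:=L^i$ (or a cable with \emph{fixed} parameters) and assert that ``stacking copies of $J$ forces $n_i\to\infty$.'' This is false: the minimal intersection of $L^n$ with a meridional disk is $2$ for \emph{every} $n$ --- each copy of $J$ contributes components that wrap twice in total, and stacking more copies does not increase the wrapping number. Indeed the paper explicitly notes that $\AKh(L^n)$ is supported in annular gradings at most $2$ (this is why the instanton argument suffices to verify the conjecture for $L^n$ itself), and the proof of Theorem~\ref{KBthm} shows the top grading of $L^n_{m,1,\dots,1}$ is $2m$, independent of $n$. So for your family the top annular grading is constant, and the first half of the corollary fails. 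The growth must come from the \emph{cabling} parameter: take for instance $L_i:=L^1_{i,\mathbf{1}}$ or $L^n_{\mathbf{m},\mathbf{s}}$ with $\min_j m_j\to\infty$, whose top annular grading is $2i$ (resp.\ $2\min_j m_j$) by Theorems~\ref{AKhthm} and~\ref{KBthm}, while the meridional torus disjoint from $L^n$ remains disjoint from every cable, so the Floer-side bound is uniform in $i$. With that substitution the rest of your argument goes through.
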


\begin{cor}
There is an infinite family of links $L_i$ such that the maximal non-zero annular grading for the Kauffman bracket of the links $L_i$ grows infinitely large but the maximal non-zero degree of the Alexander polynomial for an associated link in $\Sigma(L_i)$ is bounded.
\end{cor}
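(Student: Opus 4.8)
The plan is to read this off from the preceding corollary by passing to graded Euler characteristics; all of the genuine geometric input --- the meridional torus disjoint from the link, its lift to the double branched cover, and the Thurston-norm detection results of \cite{juhasz_floer_2008} --- has already been used to prove that corollary, so nothing new of that kind is needed here. First I would take $\{L_i\}$ to be the same infinite family that establishes the previous corollary, for instance the annular closures $L^n$ of $n$ vertically stacked copies of the tangle $J$ of Figure~\ref{tangle} (a special case of the cable families in Theorem~\ref{KBthm}, with trivial braids), chosen so that the minimal intersection number $n_i$ of $L_i$ with a meridional disk tends to infinity. By Theorem~\ref{KBthm} the wrapping conjecture holds for these links, so the maximal non-zero annular degree of the Kauffman bracket of $L_i$ is exactly $n_i$ and hence grows without bound; this settles the first half of the statement.

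For the second half, let $\widetilde B_i \subset \Sigma(L_i)$ denote the associated link, i.e.\ the preimage of the braid axis under the double branched cover $\Sigma(L_i) \to S^3$, so that it is precisely the link appearing in the previous corollary. The multivariable Alexander polynomial of $\widetilde B_i$ in $\Sigma(L_i)$ agrees, up to multiplication by a unit, with the graded Euler characteristic of the link Floer homology $\widehat{\HFL}(\Sigma(L_i),\widetilde B_i)$; consequently its maximal non-zero degree is bounded above by the largest Alexander grading in which $\widehat{\HFL}(\Sigma(L_i),\widetilde B_i)$ is non-zero. By the previous corollary that grading is bounded independently of $i$, and therefore so is the degree of the Alexander polynomial. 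Combining the two halves proves the corollary.

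The step I expect to need the most care is this Euler-characteristic reduction itself: one must specify which variable the "degree" is taken in --- the one dual to the meridian of $\widetilde B_i$ --- and one must handle the fact that $\Sigma(L_i)$ need not be a rational homology sphere, so that the object playing the role of the Alexander polynomial should really be interpreted as the Turaev (maximal abelian) torsion of $\Sigma(L_i) \setminus \widetilde B_i$. The identification of this torsion with $\chi\bigl(\widehat{\HFL}(\Sigma(L_i),\widetilde B_i)\bigr)$, after distributing over $\Spin^c$ structures, continues to hold in this generality, but pinning down the normalization and grading conventions (and checking that the bound from the previous corollary survives them) is where the actual bookkeeping lies. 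Once those conventions are fixed, the deduction requires no further geometry.
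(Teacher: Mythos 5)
Your overall strategy is sound and is essentially the intended one: the paper gives no explicit proof of this corollary, and deducing the decategorified statement from the preceding (Floer-homological) corollary by passing to graded Euler characteristics is exactly how it is meant to follow. Your caution about interpreting the ``Alexander polynomial'' as Turaev torsion when $\Sigma(L_i)$ is not a rational homology sphere, and about which variable carries the degree, is also well placed.

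However, the family you actually name does not satisfy the first half of the statement. The links $L^n$ obtained by stacking $n$ copies of the tangle $J$ and taking the annular closure all have wrapping number $2$, independent of $n$: the tangle $J$ is a two-strand tangle, so vertical stacking does not increase the intersection with a meridional disk, and this is reflected in the paper itself --- the proof of Theorem~\ref{AKhthm} notes that $\AKh(L^n)$ is supported in $k$-grading at most $2$, and the proof of Theorem~\ref{KBthm} shows that the maximal non-zero annular degree of the Kauffman bracket of $L^n_{m,1,\ldots,1}$ is $2m$, with no dependence on $n$. So for $\{L^n\}_n$ the maximal annular degree of the Kauffman bracket is constantly $2$ and does not grow. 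The parameter that must tend to infinity is the cabling parameter $m$, not the stacking parameter $n$: take for instance $L_i = L^1_{i,1}$ (or any $L^n_{i,1,\ldots,1}$), for which Theorem~\ref{KBthm} gives maximal annular Kauffman degree $2i \to \infty$, while the meridional torus disjoint from $L^n$ remains disjoint from all of its cables, so the generalized Thurston norm stays bounded by $2$ and the Floer-theoretic grading bound --- and hence, by your Euler-characteristic reduction, the bound on the degree of the Alexander polynomial of the associated link in $\Sigma(L_i)$ --- is uniform in $i$. With that substitution the rest of your argument goes through.
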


The proofs of Theorems~\ref{KBthm} and~\ref{KBWhThm} are entirely diagrammatic and involve understanding a specific resolution of the links in question. To prove Theorems~\ref{generalcable},~\ref{AKhthm},~\ref{AKhBraidthm}, and~\ref{AKhWhThm} we extend the Batson-Seed link splitting spectral sequence~\cite{batson_link-splitting_2015} to the annular setting.

\newtheorem*{ABS}{Theorem~\ref{annularBS}}
\begin{ABS}
Let $L$ be an annular link and $R$ a ring. Choose weights $w_c \in R$ for each component $c$ of $L$. Then there is a spectral sequence with pages $E_k(L,w)$, and $$E_1(L,w) \cong \AKh(L;R)$$
If the difference $w_c - w_d$ is invertible in $R$ for each pair of components $c$ and $d$ with distinct weight, then the spectral sequence converges to $$\AKh\left( \coprod_{r\in R}L^{(r)};R \right) $$ where $L^{(r)}$ denotes the sub-link of $L$ consisting of those components with weight $r$.
\end{ABS}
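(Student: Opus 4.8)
The plan is to carry out Batson--Seed's construction inside the annular cube of resolutions. Recall that for an annular diagram $D$ the annular Khovanov complex $\CAKh(D;R)$ is the usual cube of resolutions with the Asaeda--Przytycki--Sikora TQFT applied: each resolution contributes $\bigotimes_Z V_Z$, where $Z$ ranges over the circles of the resolution, $V_Z = R\langle v_+,v_-\rangle$, the annular grading of $v_\pm$ on $Z$ is $0$ if $Z$ is trivial and $\pm 1$ if $Z$ is essential, and the differential $\partial^A$ is the part of the Khovanov differential preserving the annular grading; then $\AKh(L;R)=H_*(\CAKh(D;R),\partial^A)$. Batson--Seed, given weights $w_c$, deform the ordinary Khovanov differential $d$ to a differential $d_w$ by adjoining correction terms on the edges of the cube, obtaining a complex filtered by the quantum grading with $d_w = d + (\text{filtration-increasing terms})$, so that the resulting spectral sequence has $E_1\cong\Kh(L;R)$, and, when the weight differences are invertible, a sequence of local homotopy equivalences identifies $(\CKh(D;R),d_w)$ with the Khovanov complex of a diagram of the split link, so that $E_\infty\cong\Kh\!\left(\coprod_r L^{(r)};R\right)$.

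First I would define the annular deformed differential $\partial^A_w$ by adjoining to $\partial^A$ the annular analogues of Batson--Seed's correction terms (dotted saddle cobordisms in the Bar--Natan category weighted by polynomials in the differences $w_c - w_d$); one should check that these can be chosen to respect the annular grading, which is the one genuinely new point in the setup and which requires care on the edges involving essential circles. Then I would verify $(\partial^A_w)^2=0$ using the relations in the annular cobordism category, exactly as in Batson--Seed. Filtering $(\CAKh(D;R),\partial^A_w)$ by the quantum grading, the correction terms strictly raise filtration, so the associated graded differential is $\partial^A$ and hence $E_1(L,w)\cong\AKh(L;R)$. Invariance of the spectral sequence from $E_1$ onward follows by adapting the standard chain-homotopy arguments for $\AKh$ and for Batson--Seed's complex and checking that they are filtered; since $D$ has finitely many crossings the filtration is bounded, so the spectral sequence converges to $H_*(\CAKh(D;R),\partial^A_w)$.

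The main remaining task, which I expect to be the crux, is to identify $H_*(\CAKh(D;R),\partial^A_w)$ with $\AKh\!\left(\coprod_r L^{(r)};R\right)$ under the invertibility hypothesis. I would follow Batson--Seed's approach: for each pair of components with invertible weight difference they produce a local chain homotopy equivalence ``unlinking'' that pair, and one iterates until reaching a diagram $D'$ of $\coprod_r L^{(r)}$, at which point the deformed differential agrees with the undeformed one. The point to verify is that each of these moves can be realized inside the annular cobordism category, so that it is filtered and compatible with the annular grading, and that the relevant components really can be pulled apart inside the thickened annulus; in particular one must track the essential circles — which are inert under the deformation — carefully through the Gaussian eliminations along the correction edges, checking that no annular-grading-shifting maps are introduced. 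Granting this, one obtains a filtered homotopy equivalence $(\CAKh(D;R),\partial^A_w)\simeq(\CAKh(D';R),\partial^A)$, and passing to homology gives $H_*(\CAKh(D;R),\partial^A_w)\cong\AKh\!\left(\coprod_r L^{(r)};R\right)$, which is the desired $E_\infty$ page.
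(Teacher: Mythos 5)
Your proposal is correct in outline and arrives at the same theorem by the same overall scaffolding (deform, filter by the quantum grading, $E_1\cong\AKh$, local equivalences for $E_\infty$), but it differs from the paper at exactly the step you identify as the crux, and the comparison is instructive. You propose to build annular analogues of the Batson--Seed correction terms directly in the annular cobordism category, to arrange that they ``respect the annular grading,'' and then to re-verify $(\partial^A_w)^2=0$ there. The paper sidesteps all of this: it regards $L$ as a link in $S^3$, takes the full Batson--Seed complex $(\CKh(D;R),\partial+\partial^{BS})$, and observes that each of $\partial$ and $\partial^{BS}$ decomposes into a $k$-preserving piece ($\partial_0$, $\partial_0^{BS}$) and a piece lowering $k$ by $2$; decomposing the known identities $\partial^2=0$, $(\partial^{BS})^2=0$, and $\partial\partial^{BS}+\partial^{BS}\partial=0$ into $k$-homogeneous components then gives, with no further computation, that $\partial_0+\partial_0^{BS}$ squares to zero. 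So the corrections need not (and on edges merging or splitting essential circles cannot) be chosen to preserve the annular grading; one simply truncates away their $k$-lowering part, exactly as one does to the ordinary Khovanov differential when defining $\AKh$. The same device carries the rest: Batson--Seed's explicit chain homotopy equivalences for sign-assignment changes, Reidemeister moves, and crossing changes are local and preserve $k$, so they descend to the truncated complex and yield invariance and the $E_\infty$ identification without redoing the Gaussian eliminations inside an annular category. Your route would also work, but it incurs a genuine verification burden at precisely the point the paper renders automatic; your remaining geometric concern --- that differently weighted components can actually be separated inside $A\times I$ by crossing changes between them --- is well placed, and is settled by pushing one sublink entirely above the other in the $I$ direction, which realizes the disjoint union $\coprod_{r}L^{(r)}$ as an annular split link.
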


The organization of the paper is as follows. In Section~\ref{Background} we give relevant background on annular links, the Kauffman bracket, annular Khovanov homology, and the Batson-Seed link splitting spectral sequence in $S^3$. In Section~\ref{AnnularLinkSplit} we extend the Batson-Seed construction to the annular setting and prove Theorem~\ref{annularBS}. In Section~\ref{Applications} we apply the annular link splitting spectral sequence to prove Theorems~\ref{generalcable},~\ref{AKhthm},~\ref{AKhBraidthm}, and~\ref{AKhWhThm}. Finally in Section~\ref{KBSection} we turn our attention to the Kauffman bracket and prove Theorems~\ref{KBthm} and~\ref{KBWhThm}.

\subsection*{Acknowledgements}
The author would like to thank Eli Grigsby, Patrick Orson, and J\'{o}zef Przytycki for helpful conversations.

\section{Background}\label{Background}

An $n$-component annular link $L$ is an embedding of $n$ circles into the thickened annulus $A \times I$ considered up to ambient isotopy. Alternatively, an $n$-component annular link is an $n+1$-component link in $S^3$ with a distinguished unknotted component. A diagram of an annular link $L$ is a choice of a generic projection of $L$ onto $A \times \{0\}$ which records crossing information.

Given a crossing of an annular link, there are two possible resolutions of the crossing. These are referred to as the $0$-resolution and the $1$-resolution and are shown in Figure~\ref{10res}. Notice that it is possible to add a band to transform the 0-resolution into the 1-resolution or to transform the 1-resolution back into the 0-resolution. The locations for attaching the bands is indicated by dashed lines in Figure~\ref{10res}. 

\begin{figure}

  \centering
    \includegraphics[width=0.65\textwidth]{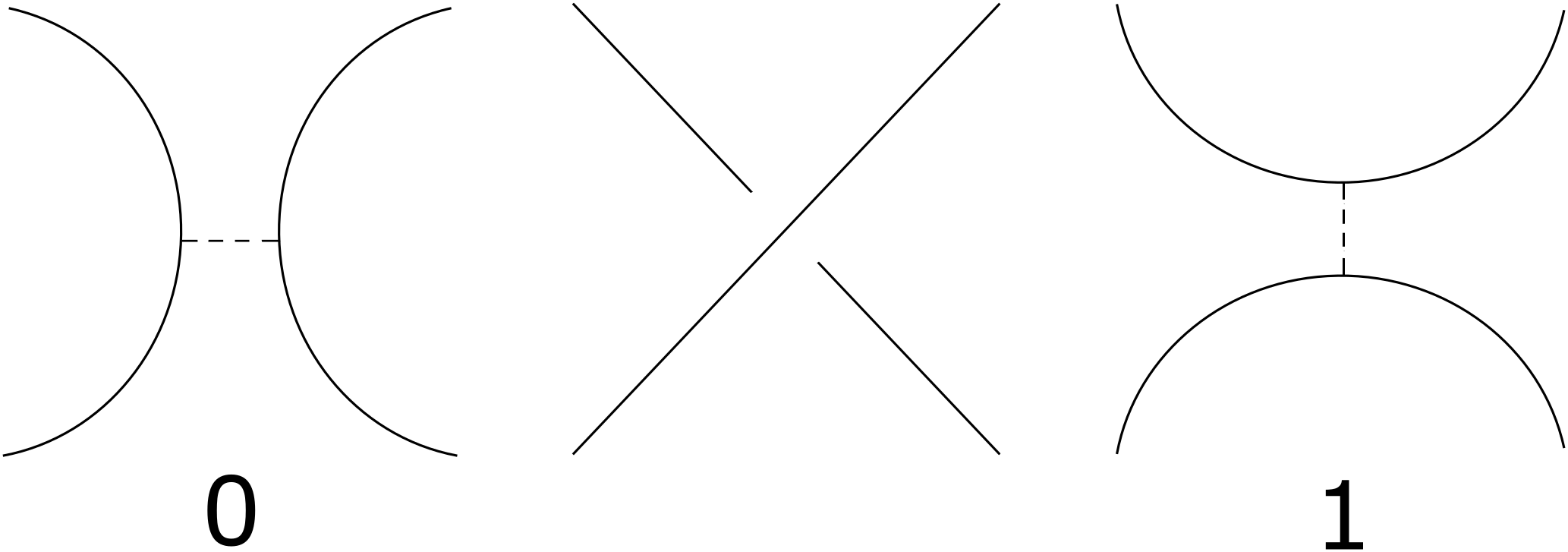}
      \caption{The 0-resolution and 1-resolutions of a crossing. The dotted lines indicate where to attach bands to change between the 0-resolution and the 1-resolution.}\label{10res}
\end{figure}

Asaeda-Przytycki-Sikora constructed a version of Khovanov homology for annular links which is now referred to as annular Khovanov homology. Annular Khovanov homology categorifies the Kauffman bracket of annular links~\cite{asaeda_categorification_2004}. Since the introduction of the theory, the main application of annular Khovanov homology to low-dimensional topology is the study of braid closures~\cite{BG, binns_knot_2020, grigsby_sutured_2014,hubbard_sutured_2017,hubbard_annular_2016}. Additionally there have been spectral sequences constructed relating annular Khovanov homology to various Floer theories~\cite{roberts_knot_2013, grigsby_khovanov_2010, xie_instantons_2018}.

The annular Khovanov homology of an annular link $L$ is constructed by taking a diagram for $L$ in the annulus and constructing a cube of resolutions from the diagram, assigning a triply graded vector space to each complete resolution, and assigning linear maps to the edges of the cube. For our applications, the details of the vectors spaces and maps are not needed but a full definition of this invariant appears in~\cite{asaeda_categorification_2004} where it was introduced. 

\subsection{Batson-Seed link splitting spectral sequence} 
 In~\cite{batson_link-splitting_2015}, Batson-Seed constructed a link-splitting spectral sequence for Khovanov homology of links in $S^3$. In Section~\ref{AnnularLinkSplit} we will verify that their arguments also work for constructing a similar spectral sequence for annular Khovanov homology. Here we will briefly recall some details of their construction relevant to Section~\ref{AnnularLinkSplit}.
 
 Batson-Seed construct their spectral sequence by taking the Khovanov chain complex and perturbing it by adding an additional differential $\partial^{BS}$. This differential does not respect the $i$ or $j$-gradings on the chain complex but does respect an $\ell$-grading defined as the difference $i - j$. Additionally, the entire chain complex with the perturbation is $g$-filtered where for an $n$-component link, the $g$-filtration is defined as $\frac{ j - n} { 2}$. This filtration is what induces the spectral sequence from the Khovanov homology of a link to the Khovanov homology of a splitting of the link.
 
 The construction of the perturbed differential requires a choice of sign assignment $s$ on a diagram of the link but Batson-Seed show that the filtered chain homotopy type of the construction does not depend on these choices. Additionally the construction requires that every component of $L$ be given a weight $w_i$,  when the difference $w_i - w_j$ between any two non-equal weights is a multiplicative unit then the spectral sequence converges to the Khovanov homology of a link built by taking disjoint union of the sublinks $L_i$ consisting of all the components weighted by $w_i$. For the applications in this paper we will be working over the field $\mathbb{C}$ so the unit condition is always satisfied.

\subsection{Sutured Khovanov homology of balanced tangles in $D\times I$}

The computations in Section~\ref{KBSection} require working with Khovanov-type invariants of balanced tangles. We briefly recall some information about these invariants and their relationship with annular Khovanov homology here.

A tangle $T$ is an embedding of some number of circles and intervals into $D^2 \times I$ so that the boundary of  $T$ is a subset of $D^2 \times \{0\} \cup D^2 \times \{1\}$. A tangle is balanced if the intersection number of $T$ with $D^2 \times \{0\}$ is the same as the intersection number of $T$ with $D^2 \times \{1\}$. We can assume that the intersections of $T$ with the two disks happen in the same points.

As with the other Khovanov-type invariants, the sutured Khovanov homology of a balanced tangle $T$ is constructed by taking a diagram for $T$ and constructing a cube of resolution made up of flat tangles, tangle diagrams with no crossings. Each balanced tangle is then replaced with a graded vector space and the edges of the cube are replaced with linear maps. For our applications, the details of the vectors spaces and maps are not needed but a full definition of this invariant appears in~\cite{elisenda_grigsby_colored_2010} where it was introduced.

Given a balanced tangle $T$, we can construct an annular link by identifying $D^2 \times \{0\}$ with $D^2 \times \{1\}$ via the identity map which we will call the annular closure of the tangle. Reversing this process, it is possible to construct a balanced tangle from an annular link $L$ by decomposing or ``cutting open" $A \times I$ along a meridional disk that $L$ intersects transversely.

There is a relationship between the annular Khovanov homology of $L$ and the sutured Khovanov homology of a tangle $T$ obtained by decomposing $A \times I$ along a meridional disk $D$.

\begin{thm}[\cite{grigsby_khovanov_2010}]
Let $L$ be an annular link and let $T$ be the tangle obtained from $L$ by decomposing along a meridional disk $D$. Then the annular Khovanov homology of $L$ in $k$-grading $w$ is isomorphic to the sutured Khovanov homology of $T$ where $w$ is the number of intersection points of $L$ with $D$.
\end{thm}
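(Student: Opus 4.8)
The plan is to produce a chain isomorphism from the annular grading $w$ summand of the annular Khovanov complex $\CAKh(L)$ onto the sutured Khovanov complex of $T$, using a single diagram to compute both sides. We may assume the meridional disk $D$ meets the projection annulus $A\times\{0\}$ in a single properly embedded arc $\gamma$, and we choose a diagram $D_L$ of $L$ whose crossings all avoid $\gamma$; then $D_L$ meets $\gamma$ transversely in the $w$ points of $L\cap D$. Cutting the annulus open along $\gamma$ turns $D_L$ into a diagram $D_T$ for the balanced tangle $T$ with $2w$ endpoints and the same set of crossings, so the cubes of resolutions defining $\AKh(L)$ and the sutured Khovanov homology of $T$ are indexed by the same cube $\{0,1\}^{N}$ (one coordinate per crossing), and at each vertex $v$ the flat tangle $D_T(v)$ is obtained from the collection of loops $D_L(v)$ by cutting along $\gamma$. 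Since a resolution move is local and away from $\gamma$, each $D_L(v)$ still meets $\gamma$ in exactly $w$ points: a loop disjoint from $\gamma$, necessarily nullhomotopic in $A$, becomes a closed circle of $D_T(v)$; a loop meeting $\gamma$ once, necessarily essential, becomes a single through-strand; and any loop meeting $\gamma$ more often contributes at least one turnback arc.

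The heart of the argument is the vertex-by-vertex matching of modules. In the Asaeda--Przytycki--Sikora construction an essential loop carries a label in a two-dimensional space with $k$-weights $\pm1$ and a nullhomotopic loop a label with $k$-weight $0$; since the essential loops of $D_L(v)$ are pairwise disjoint and each meets $\gamma$, their number is at most $w$, and it equals $w$ exactly when each of them meets $\gamma$ once and no nullhomotopic loop meets $\gamma$ --- that is, exactly when $D_T(v)$ is the $w$-strand identity tangle together with some closed circles and carries no turnback. Hence a generator of $\CAKh(L)$ in grading $k=w$ is supported on such a vertex and must label every essential loop by the $k$-weight $+1$ generator, so the grading $w$ part of the APS module at $v$ is $V^{\otimes t(v)}$, where $t(v)$ is the number of nullhomotopic loops, and it vanishes on all other vertices. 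On the other side, the sutured Khovanov module of $D_T(v)$ is $V^{\otimes t(v)}$ when $D_T(v)$ is the identity tangle union $t(v)$ circles and vanishes as soon as $D_T(v)$ contains a turnback --- equivalently, when the associated sutured submanifold is not taut. Establishing this dictionary, and in particular extracting the turnback vanishing from the definition of sutured Khovanov homology of balanced tangles in~\cite{elisenda_grigsby_colored_2010}, is the step I expect to cost the most.

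It then remains to check that the differentials agree under this identification. Restricted to the grading $w$ summand only edges between vertices with the maximal $w$ essential loops act nontrivially; such an edge either merges or splits nullhomotopic loops --- the ordinary product and coproduct on $V$ --- or merges a nullhomotopic loop into an essential one, or performs the inverse split. It can never merge two essential loops, since the resulting embedded curve in the annulus would have winding number $2$, and it can never split one essential loop into two. Because the essential labels are pinned to the $k$-weight $+1$ generator, the two mixed maps become, on the grading $w$ part, a counit and a unit respectively; under the identification of $D_T(v)$ with its closed circles these are precisely the edge maps of the sutured Khovanov complex of $D_T$, in which a through-strand component plays the role of the ground ring. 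Assembling the vertex and edge identifications gives a chain isomorphism, and passing to homology proves the theorem. One could instead run the same bookkeeping on the sutured Floer homology of the double branched cover, where the per-vertex vanishing is exactly the failure of tautness noted above.
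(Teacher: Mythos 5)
The paper does not actually prove this statement: it is quoted as background from Grigsby--Wehrli \cite{grigsby_khovanov_2010}, so there is no in-paper argument to compare against. Your proof is essentially the standard one and its architecture is sound: cut along an arc $\gamma$ disjoint from the crossings, observe that every resolution still meets $\gamma$ in exactly $w$ points, so the number of essential circles at a vertex is at most $w$, with equality precisely at the turnback-free vertices, and then match modules and $k$-preserving edge maps vertex by vertex. The step you flag as costly (vanishing of the tangle module at a resolution containing a turnback) is indeed where the content lives, but in \cite{elisenda_grigsby_colored_2010} it is essentially built into the definition (equivalently, it is the non-tautness of the associated sutured double branched cover), so your plan closes.

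One justification is wrong, although the conclusion it supports is still true. You claim an edge of the cube can never merge two essential loops ``since the resulting embedded curve in the annulus would have winding number $2$.'' That is false: two disjoint essential circles in the annulus are parallel, and a band connecting them produces an embedded nullhomotopic circle (winding number $0$), so such merges do occur in the cube. What saves you is grading, not topology: such an edge maps a vertex with $e(v)=w$ essential circles to one with $e=w-2$, whose $k=w$ summand is zero --- equivalently, on generators with both essential circles labelled by the $k=+1$ element the merge drops the $k$-degree by $2$, so its $k$-preserving component vanishes. Replace the winding-number claim with this observation (and note the symmetric fact that a trivial circle at such a vertex cannot split into two essential ones, since that would force $e(v')=w+2>w$). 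A smaller quibble: the mixed merge and split restricted to the extremal grading are $v_+\mapsto 1$, $v_-\mapsto 0$ and $1\mapsto v_-$, which are not literally the counit and unit of the Frobenius algebra; what matters is only that they agree with the corresponding maps in the tangle complex, which they do.
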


Because the wrapping conjecture relates to a specific annular grading, we will use ideas from the sutured Khovanov homology of balanced tangles in parts of Section~\ref{KBSection} to assist with computations.

\section{Annular link splitting spectral sequence}\label{AnnularLinkSplit}

In this section we replicate many of the arguments from~\cite{batson_link-splitting_2015} to verify the existence of an annular link splitting spectral sequence and show that it has similar properties to the non-annular version.

\begin{thm}\label{annularBS}
Let $L$ be an annular link and $R$ a ring. Choose weights $w_c \in R$ for each component $c$ of $L$. Then there is a spectral sequence with pages $E_k(L,w)$, and $$E_1(L,w) \cong \AKh(L;R)$$
If the difference $w_c - w_d$ is invertible in $R$ for each pair of components $c$ and $d$ with distinct weight, then the spectral sequence converges to $$\AKh\left( \coprod_{r\in R}L^{(r)};R \right) $$ where $L^{(r)}$ denotes the sub-link of $L$ consisting of those components with weight $r$.
\end{thm}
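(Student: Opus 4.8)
The plan is to follow the strategy of Batson--Seed~\cite{batson_link-splitting_2015} essentially verbatim, replacing the Khovanov chain complex of a link in $S^3$ with the annular Khovanov chain complex $\CAKh(L;R)$ and checking that every ingredient of their construction survives the presence of the extra $k$-grading. First I would fix a diagram $D$ for $L$ together with a sign assignment $s$, form the usual cube of resolutions, and define the perturbed differential $\partial = \partial_{\Kh} + \partial^{BS}$ on $\CAKh(D;R)$, where $\partial^{BS}$ is built exactly as in~\cite{batson_link-splitting_2015} from the chosen weights $w_c$: on each resolution it acts by the Frobenius-algebra maps associated to merging/splitting circles, weighted by the appropriate $w_c$'s. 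The key algebraic point is that $\partial^{BS}$ is made from the same elementary cobordism maps that already appear in annular Khovanov homology, so it is automatically a map of the ambient vector spaces; one then verifies $\partial^2 = 0$ by the same local computations as in the non-annular case, since these are checks on the Frobenius algebra and are unaffected by how circles are labelled as trivial or essential.

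Next I would set up the gradings. The $\ell$-grading $i-j$ and the $g$-filtration $\tfrac{j-n}{2}$ are defined exactly as in~\cite{batson_link-splitting_2015}, and one checks that $\partial_{\Kh}$ is homogeneous of the right degrees and $\partial^{BS}$ strictly increases the $g$-filtration; this gives a filtered complex $(\CAKh(D;R),\partial)$ whose associated spectral sequence has $E_1$-page the homology of the associated graded, namely $\AKh(L;R)$. The one genuinely new bookkeeping item is the annular $k$-grading: since $\partial^{BS}$ is assembled from maps that are $k$-graded of degree $0$ (merges and splits of two trivial circles, or a trivial and an essential circle, preserve the number of essential circles, and the map killing two essential circles into a trivial one lowers $k$ — wait, one should simply observe that $\partial^{BS}$ is filtered, not homogeneous, with respect to $k$, and that the whole construction therefore respects the decreasing $k$-filtration). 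Concretely, each page $E_k(L,w)$ splits along $k$-gradings compatibly, and the convergence statement can be checked one $k$-grading at a time.

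For convergence I would invoke the invariance argument of~\cite{batson_link-splitting_2015}: the filtered chain homotopy type of $(\CAKh(D;R),\partial)$ is independent of the choices of diagram and sign assignment, proved by checking that Reidemeister moves and changes of sign assignment induce filtered chain homotopy equivalences, again because the relevant maps are built from the same cobordism maps used in annular Khovanov homology. Once invariance is in hand, under the unit hypothesis $w_c-w_d \in R^{\times}$ one runs the Batson--Seed deformation/change-of-basis argument that identifies the $E_\infty$-page: after an invertible change of variables the perturbed differential becomes the annular Khovanov differential of the diagram in which strands with distinct weights have been pulled apart, i.e.\ of $\coprod_{r\in R} L^{(r)}$. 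I expect the main obstacle to be precisely this last step in the annular category: one must make sure that the ``pulling apart'' of sublinks with distinct weights can be realized by an isotopy in the \emph{thickened annulus} (not just in $S^3$), so that the resulting diagram genuinely computes $\AKh(\coprod_r L^{(r)};R)$ with the correct $k$-gradings; this is plausible because distinct-weight components are separated without crossing the braid axis, but it needs to be stated carefully. Everything else is a routine transcription of~\cite{batson_link-splitting_2015}.
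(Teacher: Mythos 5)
Your overall strategy is the right one and matches the paper's: transplant the Batson--Seed construction, check that the sign-assignment, crossing-change, and Reidemeister arguments are local and hence survive in the annular setting, and use crossing-change invariance of the total homology to identify the target of convergence. But there is one genuine gap, and it is precisely at the step you flag with "wait": the handling of the annular $k$-grading. As your own parenthetical observes, $\partial^{BS}$ is \emph{not} $k$-homogeneous (a split of one essential circle into two raises $k$ in the adjoint direction; equivalently some elementary cobordism maps drop $k$ by $2$), so it is only $k$-filtered. From that you cannot conclude that "each page $E_k(L,w)$ splits along $k$-gradings compatibly" -- a merely $k$-filtered differential induces only a $k$-filtration on the pages, and worse, the $E_1$-page of the $g$-filtration spectral sequence of the full complex $(\CKh(D;R),\partial+\partial^{BS})$ is $\Kh(L;R)$, not $\AKh(L;R)$. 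The missing idea is to decompose \emph{both} differentials into their $k$-preserving and $k$-lowering parts, $\partial=\partial_0+\partial_-$ and $\partial^{BS}=\partial_0^{BS}+\partial_-^{BS}$, and to build the annular perturbed complex using only $\partial_0$ and $\partial_0^{BS}$. Taking $k$-homogeneous components of $\partial^2=0$, $(\partial^{BS})^2=0$, and $\partial\partial^{BS}+\partial^{BS}\partial=0$ gives $\partial_0^2=0$, $(\partial_0^{BS})^2=0$, and $\partial_0\partial_0^{BS}+\partial_0^{BS}\partial_0=0$, so this genuinely is an $(\ell,k)$-bigraded, $g$-filtered complex whose $g$-spectral sequence has $E_1\cong\AKh(L;R)$ in each $k$-grading. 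Without this restriction your construction does not produce the stated $E_1$-page.

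Once that correction is made, the rest of your outline goes through as in the paper: the explicit chain maps Batson--Seed use for sign-assignment independence, crossing changes, and Reidemeister moves all preserve the $k$-grading, so they descend to the $k$-associated-graded complex. Your worry about realizing the splitting inside the thickened annulus is legitimate but is resolved by the same mechanism as in $S^3$: convergence is established via invariance of the total homology under crossing changes between distinct-weight components, and such crossing changes suffice to isotope the sublinks $L^{(r)}$ into disjoint sublevels $A\times[a_r,b_r]$ of $A\times I$, which is the annular disjoint union appearing in the statement.
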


\begin{proof}
Considering the annular link $L$ as a link in $S^3$ and choosing a diagram $D$ we can associate to it an $\ell$-graded, $g$-filtered, and $k$-filtered chain complex $\CKh(D;R)$ with the Khovanov differential $\partial$ and the Batson-Seed perturbation $\partial^{BS}$. Both of these differentials decompose into a portion that preserves the $k$-filtration, which we call $\partial_{0}$ and $ \partial_{0}^{BS}$, and a portion that lowers the $k$-filtration by $2$, which we call $\partial_{-}$ and $ \partial_{-}^{BS}$. Decomposing the relations $\partial^2 = 0$, $(\partial^{BS})^2 = 0$, and $\partial \partial^{BS} + \partial^{BS} \partial = 0$ into their $k$-homogeneous components immediately gives that $\partial_0^2 = 0$, $\partial_0^{BS} = 0$ and $\partial_0 \partial_0^{BS} + \partial_0^{BS} \partial_0 = 0$. This shows that equipping the chain complex for $L$ only with these differentials would give a bi-graded chain complex by gradings $\ell$ and $k$ which is also filtered by the $g$-filtration. Following the notation from~\cite{batson_link-splitting_2015}, we refer to this chain complex as $AC(D,w,s)$ where $w$ represents the weighting of the components and $s$ represents a choice of sign assignment.

In Section~2.3 of~\cite{batson_link-splitting_2015}, to show that the filtered chain homotopy type of their construction did not depend on the choice of sign assignment, Batson-Seed construct an explicit chain map giving the equivalence. Their map preserves the $k$-grading so it also shows that the filtered chain homotopy type of $AC(D,w,s)$ does not depend on the sign assignment. Similarly the chain maps in Proposition~2.3 of~\cite{batson_link-splitting_2015} used to show that the relatively $\ell$-graded total homology is unchanged by crossing changes also preserve the $k$-grading so their argument also applies in the annular setting.

The arguments to show that the filtered chain homotopy type does not depend on the choice of diagram in Section~2.3 of~\cite{batson_link-splitting_2015} work in the annular setting as well. The arguments in~\cite{batson_link-splitting_2015} consider local diagrams for the Reidemeister moves, resolve the local diagrams, construct some local cancelations and then produce an isomorphism on the level of a diagrammatic chain complex. The local nature of the arguments ensures that they also will work in the annular setting to show invariance under the Reidemeister moves. 

\end{proof}

The existence of this annular link splitting spectral sequence gives the following rank inequality as an immediate consequence. The proof follows exactly as in the proof of~Corollary~3.4 in~\cite{batson_link-splitting_2015}.

\begin{cor}
Let $\mathbb{F}$ be any field, and let $L$ be an annular link with components $K_1, \ldots, K_m$. Then $$ \rank^{\ell,k} \AKh^{*}(L,\mathbb{F}) \geq  \rank^{\ell+t,k} \otimes_{c=1}^m\AKh^{*}(K_c,\mathbb{F}) $$ where each side is bi-graded by $\ell,k$ and the shift $t$ is given by $$t = \sum_{c \leq d} 2\cdot \lk(L_c , L_d) $$ where $L_c$ and $L_d$ are components of the link $L$.
\end{cor}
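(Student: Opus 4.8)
The plan is to mirror Batson–Seed's derivation of their Corollary~3.4 verbatim, using Theorem~\ref{annularBS} in place of the non-annular link-splitting spectral sequence. First I would record the relevant structure: by Theorem~\ref{annularBS}, for the weighting $w$ that assigns a pairwise-distinct unit weight $w_c$ to each component $K_c$ (possible since $\mathbb F$ is a field, so all nonzero differences are invertible), there is a spectral sequence with $E_1(L,w) \cong \AKh(L;\mathbb F)$ converging to $\AKh\bigl(\coprod_{c=1}^m K_c;\mathbb F\bigr)$, the annular Khovanov homology of the split link obtained by pulling the components apart. Since a spectral sequence can only decrease total rank from its $E_1$ page to its $E_\infty$ page, one gets $\rank \AKh(L;\mathbb F) \geq \rank \AKh\bigl(\coprod_c K_c;\mathbb F\bigr)$, and the split link's invariant is the tensor product $\bigotimes_{c=1}^m \AKh(K_c;\mathbb F)$ by the usual Künneth-type behavior of Khovanov-type homology under disjoint union (which holds annularly as well, since disjoint union of annular diagrams gives a tensor product of cube-of-resolutions complexes).

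The only real content beyond ``spectral sequences don't increase rank'' is bookkeeping the gradings. I would track the $\ell$-grading ($\ell = i - j$) and the annular $k$-grading through the construction. The perturbed differential $\partial^{BS}$, hence the whole spectral sequence, respects $\ell$ and (by the argument in the proof of Theorem~\ref{annularBS}) is $k$-filtered with the pieces $\partial_0^{BS}$, $\partial_-^{BS}$ behaving as stated; so the bigrading by $(\ell,k)$ is well-defined on $E_1$ and the differentials respect it appropriately, giving the rank inequality in each fixed $(\ell,k)$ bidegree. The $\ell$-grading shift $t$ arises exactly as in Batson–Seed: the perturbation $\partial^{BS}$ is not $\ell$-homogeneous on the nose, but comparing the $\ell$-grading of a generator in $AC(D,w,s)$ to its $\ell$-grading viewed in the split link requires a normalization that shifts by the sum of linking numbers, yielding $t = \sum_{c \leq d} 2\,\lk(L_c,L_d)$. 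Since the Batson–Seed computation of this shift is purely a grading computation on generators of the cube of resolutions — local, and insensitive to whether one remembers the annular axis — it carries over unchanged; the annular $k$-grading of each generator is literally the same whether computed in $L$ or in its splitting, so there is no $k$-shift.

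I would then simply assemble these: in each bidegree,
\[
\rank^{\ell,k}\AKh^{*}(L;\mathbb F) \;=\; \rank^{\ell,k} E_1(L,w) \;\geq\; \rank^{\ell,k} E_\infty(L,w) \;=\; \rank^{\ell-t,k}\!\!\bigotimes_{c=1}^m \AKh^{*}(K_c;\mathbb F),
\]
and reindex to get the statement as written. The main obstacle, such as it is, is making sure the $\ell$-grading shift has the same sign and normalization in the annular setting as in Batson–Seed; but since the annular structure only adds a grading that is preserved on the nose and never interacts with the linking-number bookkeeping, I expect this to reduce to citing their computation. Everything else is the formal fact that the $E_1$ page of a spectral sequence has rank at least that of the $E_\infty$ page, applied bidegree by bidegree.
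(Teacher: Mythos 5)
Your proposal is correct and follows essentially the same route as the paper, which simply states that the proof is identical to that of Corollary~3.4 of Batson--Seed, run through the annular spectral sequence of Theorem~\ref{annularBS}; your added observations (the annular $k$-grading is preserved on the nose, the $\ell$-shift $t$ is the same linking-number bookkeeping as in the non-annular case, and disjoint union gives a tensor product annularly) are exactly the points the paper leaves implicit.
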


As in the non-annular version, this annular link splitting spectral sequence can provide lower bounds on the splitting number of a link. We state the bound here but we will not use it in the rest of this paper. The proof of the bound is the same as the proof of Theorem~1.2 in~\cite{batson_link-splitting_2015}.

\begin{defin}
We say that an $n$-component link $L$ is an annular split link if after isotopy of the link in $A \times I$ it is possible to find numbers $t_1,\ldots ,t_{n-1} \in I$ such that the surfaces $(S^1 \times t_i) \times I$ in $A\times I$ separate the components of $L$.
\end{defin}

\begin{defin}
The annular splitting number of an annular link $L$ is the minimum number of times different components of the link must be passed through one another to obtain an annular split link.
\end{defin}

\begin{thm}
Let $L$ be an annular link and let $w_c \in R$ be a set of component weights such that $w_c - w_d$ is invertible for each pair of components $c$ and $d$. Let $b(L,w)$ be the largest integer $k$ such that $E_k \not= E_\infty(L,w)$. Then $b(L,w)$ is less than or equal to the annular splitting number of $L$.
\end{thm}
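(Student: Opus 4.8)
The plan is to mirror the proof of Theorem~1.2 in~\cite{batson_link-splitting_2015} essentially verbatim, working throughout with the $\ell$-graded, $g$-filtered, $k$-filtered complex $AC(D,w,s)$ and with annular Khovanov homology in place of the Batson--Seed complex and ordinary Khovanov homology. The argument is an induction on the annular splitting number of $L$, and the only genuinely new point is to check that the additional $k$-filtration is compatible with every chain map used.

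First I would dispose of the base case. Suppose $L$ is an annular split link. The hypothesis that $w_c - w_d$ is invertible for every pair of distinct components forces the weight function to be injective, so each sublink $L^{(r)}$ is a single component and $\coprod_{r\in R} L^{(r)}$ is annular isotopic to $L$ (isotope $L$ so that the separating annuli nest its components). Hence $E_1(L,w)\cong\AKh(L;R)\cong\AKh\bigl(\coprod_{r} L^{(r)};R\bigr)\cong E_\infty(L,w)$, the spectral sequence degenerates at its first page, and $b(L,w)\le 0$, which is the annular splitting number of $L$.

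For the inductive step, let $u\ge 1$ be the annular splitting number of $L$ and fix a sequence of crossing changes realizing it: annular links $L=L_0,L_1,\dots,L_u$ with $L_u$ annular split and $L_{j+1}$ obtained from $L_j$ by a single crossing change between two components carrying distinct weights. Since a crossing change does not alter the component set, the same weight function $w$ is defined on every $L_j$, and it suffices to prove $b(L_j,w)\le b(L_{j+1},w)+1$ for $0\le j\le u-1$; iterating this and invoking the base case then gives $b(L,w)=b(L_0,w)\le b(L_u,w)+u=u$. To establish the step inequality I would reproduce the key lemma of~\cite{batson_link-splitting_2015}: a crossing change between strands of two components of distinct weight is realized on the level of perturbed complexes by a pair of filtered chain maps between $AC(D_j,w,s)$ and $AC(D_{j+1},w,s')$ whose composites are chain homotopic to the respective identities through homotopies that move the $g$-filtration by at most one (this is exactly where invertibility of the weight difference is used, as in~\cite{batson_link-splitting_2015}), and then apply the elementary filtered-complex lemma of~\cite{batson_link-splitting_2015} that turns the existence of such a pair of maps into the bound $b(L_j,w)\le b(L_{j+1},w)+1$.

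The only ingredient not already supplied by~\cite{batson_link-splitting_2015} is the verification that these chain maps and homotopies respect the $k$-filtration, and this is precisely the bookkeeping already carried out in the proof of Theorem~\ref{annularBS}: the Batson--Seed maps are assembled from elementary cobordism maps, reorderings of the cube of resolutions, and the explicit sign-change and crossing-change maps of Section~2.3 and Proposition~2.3 of~\cite{batson_link-splitting_2015}, all of which act on the annular grading in the standard way and in particular preserve the $k$-grading. I expect the main obstacle to be exactly the one present in~\cite{batson_link-splitting_2015}: producing the crossing-change maps between $AC(D_j,w,s)$ and $AC(D_{j+1},w,s')$ and controlling how far they displace the $g$-filtration. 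In the annular setting this requires no new idea, since the crossing change is supported in a small ball disjoint from a meridional disk and therefore does not interact with the annular grading at all.
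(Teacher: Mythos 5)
Your proposal is correct and follows exactly the route the paper intends: the paper's ``proof'' of this theorem is literally the single sentence that it is the same as Theorem~1.2 of Batson--Seed, and your writeup is a faithful expansion of that argument (induction on the number of inter-component crossing changes, the filtered crossing-change maps shifting the $g$-filtration by at most one, degeneration at $E_1$ for a split diagram where the perturbation vanishes), together with the one genuinely annular check --- compatibility with the $k$-filtration --- which, as you note, is the same bookkeeping already done in the proof of Theorem~\ref{annularBS}.
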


\section{Link splitting and the maximal annular grading}\label{Applications} 

Now we look at some applications of the spectral sequence from Theorem~\ref{annularBS} to verifying the categorified wrapping conjecture for some families of examples. The idea of the applications is to consider a specific splitting of the link so that it is easier to show the annular Khovanov homology of the resulting splitting is non-zero in the desired annular grading.

\begin{thm}\label{generalcable} 
Let $L$ be an $n$-component annular link for which the categorified wrapping conjecture holds and let $L_{m,\textbf{s}}$ be the link where the $i$-th component of $L$ is replaced with a link $s_i$ obtained as the closure of a tangle $T_i$ consisting of an $m$-string link and possibly additional closed components. Then the categorified wrapping conjecture also holds for the cable $L_{m,\textbf{s}}$.
\end{thm}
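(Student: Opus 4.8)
The plan is to bound the maximal annular grading of $\AKh(L_{m,\textbf{s}})$ above by an elementary diagram argument and below by applying the annular Batson-Seed spectral sequence of Theorem~\ref{annularBS} to a carefully weighted copy of $L_{m,\textbf{s}}$; I work over $\mathbb{C}$ throughout, so all invertibility hypotheses are automatic.

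Let $N$ be the minimal intersection of $L$ with a meridional disk, so that by hypothesis the maximal nonzero annular grading of $\AKh(L)$ is $N$. Taking $m$ parallel copies of a diagram of $L$ realizing this minimal intersection and splicing in diagrams of the tangles $T_i$ together with their closed components in a region disjoint from the chosen radial arc produces a diagram of $L_{m,\textbf{s}}$ meeting a meridional disk in $mN$ points. Since $\AKh$ is supported in annular gradings of absolute value at most the minimal meridional intersection number (the elementary direction of the wrapping conjecture, immediate from the construction), this already shows the maximal nonzero annular grading of $\AKh(L_{m,\textbf{s}})$ is at most $mN$. It therefore suffices to exhibit a nonzero class in annular grading $mN$; such a class also forces the minimal meridional intersection number of $L_{m,\textbf{s}}$ to be $mN$, so that the two sides of the conjecture for $L_{m,\textbf{s}}$ agree.

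For the lower bound, let $c_{i,1},\dots,c_{i,m}$ be the closures of the $m$ strands of the copy of $T_i$ that replaces the $i$-th component of $L$. Each $c_{i,k}$ coincides with the $i$-th component of $L$ outside a ball and differs from it inside that ball only by a possibly knotted (possibly trivial) arc; that is, $c_{i,k}$ is the $i$-th component with a local knot tied in, and the balls for distinct $i$ are disjoint. Apply Theorem~\ref{annularBS} to $L_{m,\textbf{s}}$ with weights $w(c_{i,k}) = \omega_k$ depending only on $k$ and $w(c) = \omega_0$ for each inserted closed component $c$, where $\omega_0, \dots, \omega_m$ are distinct complex numbers. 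The resulting spectral sequence has $E_1 \cong \AKh(L_{m,\textbf{s}})$ and converges to $\AKh$ of the annular split union of the sublinks $\Lambda_k := c_{1,k} \cup \dots \cup c_{n,k}$, for $k = 1, \dots, m$, together with the inserted closed components. Because the local modifications occur in disjoint balls, $\Lambda_k$ is ambient isotopic to $L$ with a local knot tied into each component, and tying in local knots changes neither the wrapping number nor the maximal nonzero annular grading of $\AKh$ (this follows from the connect-sum formula for Khovanov homology together with the fact that the Khovanov homology of a link contained in a ball lies in annular grading $0$); hence the maximal nonzero annular grading of $\AKh(\Lambda_k)$ is again $N$. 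As $\AKh$ of an annular split union is the tensor product of the $\AKh$ of the pieces, and the closed-component piece contributes only in annular grading $0$, the limit of the spectral sequence is supported in annular gradings at most $\sum_{k=1}^{m} N = mN$ and is nonzero in grading $mN$, since a tensor product of nonzero homogeneous pieces over a field is nonzero. Finally, the differential in this spectral sequence is the part of the perturbed Khovanov differential preserving the annular grading, namely the $\partial_0 + \partial_0^{BS}$ defining the complex $AC(D,w,s)$ in the proof of Theorem~\ref{annularBS}; hence $E_\infty$ is a subquotient of $E_1 = \AKh(L_{m,\textbf{s}})$ compatibly with the annular grading (equivalently, the associated rank inequality carries no shift in the annular grading). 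Thus $\AKh(L_{m,\textbf{s}})$ is nonzero in annular grading $mN$, which together with the upper bound completes the proof.

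The principal obstacle is designing the splitting: Theorem~\ref{annularBS} only ever produces split links, and the hypothesis controls $\AKh(L)$ rather than the annular Khovanov homology of the individual components of $L$, so the weights must be arranged so that the limit retains $m$ full isotopic copies of $L$, one for each ``level'' $k$, rather than $nm$ separate components. The remaining delicate point is that the strands of the string links $T_i$ force a local knot into each component of those copies, which must be absorbed via the connect-sum formula; everything else (the elementary upper bound, multiplicativity of $\AKh$ under annular split union, the annular grading of a ball-supported link, and compatibility of the spectral sequence with the annular grading) is standard or was recorded in Section~\ref{AnnularLinkSplit}.
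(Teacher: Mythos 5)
Your proposal is correct and follows essentially the same route as the paper: the same elementary upper bound, the same weighting scheme (one weight per strand index, one extra weight for all closed components), identification of the spectral sequence limit as $m$ split copies of $L$ modified by connected sums and disjoint unions with links in $S^3$, and the rank inequality in the annular grading. The only cosmetic difference is that where you informally justify that local knotting does not move the top annular grading, the paper cites Lemma~3.5 of Grigsby--Ni for the connected-sum step.
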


\begin{proof}
Notice that if $D$ is a disk which intersects $L$ in $\wrap(L)$ points, then $D$ intersects $L_{m,\textbf{s}}$ in $m \cdot \wrap(L)$ points. So we know $\wrap(L_{m,\textbf{f}}) \leq m \cdot \wrap(L)$. Now we will show that $\AKh(L_{m,\textbf{f}})$ is non-zero in $k$-grading $m \cdot \wrap(L)$. Together this will show that the categorified wrapping conjecture holds for $L_{m,\textbf{s}}$.

Consider the link $L_{m,\textbf{s}}$ and choose $m+1$ distinct weights $w_1, \ldots, w_{m+1} \in \mathbb{C}$ and weight the link $L_{m,\textbf{s}}$ so that for each individual satellite, the $m$ components of the string link closure are weighted differently using weights $w_1,\ldots, w_m$ and then all the other components are weighted with $w_{m+1}$.  Using these weights, the link splitting spectral sequence converges to the $k,\ell$ bi-graded annular Khovanov homology of a link built from the disjoint union of $m$ copies of $L$ by taking additional disjoint unions and connected sums with links in $S^3$.

The annular Khovanov homology of the disjoint union of $m$ copies of $L$ is non-zero in the $k$-grading $m \cdot \wrap(L)$ and taking disjoint unions and connected sums with links in $S^3$ does not change the maximal non-zero annular grading. For disjoint unions this is immediate from the decomposition as a tensor product and for connected sums this is the content of~\cite[Lemma~3.5]{grigsby_sutured_2014}. Then rank inequality from the annular link splitting spectral sequence ensures that $\AKh(L_{m,\textbf{s}})$ is non-zero in $k$-grading $m \cdot \wrap(L)$ as well.
\end{proof}

\begin{thm}\label{AKhthm} 

Let $L^n$ be the annular link built by vertically stacking $n$ copies of tangle $J$ from Figure~\ref{tangle} and then taking the annular closure. Also, let $L^{n}_{\textbf{m},\textbf{s}}$ be the link built by replacing the $i$-th component of $L^n$ with a link $s_i$ obtained as the closure of a tangle $T_i$ consisting of an $m_i$-string link and possibly additional closed components. Then the categorified wrapping conjecture holds for the cable $L^{n}_{\textbf{m},\textbf{s}}$.

\end{thm}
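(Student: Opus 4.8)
The plan is to apply the annular link splitting spectral sequence of Theorem~\ref{annularBS} to a suitably weighted $L^n_{\textbf{m},\textbf{s}}$, exactly in the spirit of Theorem~\ref{generalcable}, thereby reducing the statement to the categorified wrapping conjecture for the base links $L^n$ (and their sublinks), which is the only genuinely new input. First I would record the evident extension of Theorem~\ref{generalcable} to cabling data $\textbf{m}=(m_1,\dots)$ that varies with the component: the proof is verbatim once one chooses enough distinct weights in $\mathbb{C}$, weights the strands of the $i$-th satellite by $w_1,\dots,w_{m_i}$ and its closed components by a common further weight, so that the spectral sequence converges to $\AKh$ of a split union of parallel copies of sublinks of $L^n$, together with disjoint unions and connected sums with links in $S^3$. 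By the tensor product formula for split links and \cite[Lemma~3.5]{grigsby_sutured_2014} for connected sums, neither of the last two operations changes the maximal non-zero annular grading; and since the rank inequality coming from the spectral sequence shifts only the $\ell$-grading and preserves the annular ($k$-)grading, non-vanishing in the top annular grading passes back from the limit to $L^n_{\textbf{m},\textbf{s}}$. Thus it suffices to verify the categorified wrapping conjecture for $L^n$ and for each sublink of $L^n$ obtained by deleting components.

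Next I would prove the conjecture for $L^n$ by applying Theorem~\ref{annularBS} directly. The construction of $L^n$ by stacking $n$ copies of $J$ exhibits it as $n$ linked ``levels'', each a copy of $L^1$; weighting the components so that the weight is constant on each level and distinct between levels, the spectral sequence converges to $\AKh$ of the split union of these $n$ levels, namely $\AKh(L^1)^{\otimes n}$, which by the tensor product formula is non-zero in annular grading $n$ times the maximal non-zero annular grading of $\AKh(L^1)$; as the annular grading is preserved by the spectral sequence, $\AKh(L^n)$ is non-zero in that grading as well. For the matching upper bound, stacking the meridional disk that realizes $\wrap(L^1)$ at each level yields a meridional disk meeting $L^n$ in $n\cdot\wrap(L^1)$ points, so $\wrap(L^n)\le n\cdot\wrap(L^1)$, and together with the standard fact that $\AKh(L)$ is supported in annular gradings of absolute value at most $\wrap(L)$ this pins down the maximal non-zero annular grading of $\AKh(L^n)$ to be $n\cdot\wrap(L^1)=\wrap(L^n)$ --- provided the base case holds, namely the conjecture for the single small annular link $L^1$, the annular closure of $J$. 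That base case is a finite computation: one computes $\AKh(L^1)$ directly, or equivalently via the sutured Khovanov homology of the tangle $J$. The same argument, run on the sub-tangles of $J$ obtained by deleting strands, handles the sublinks of $L^n$ that arise in the reduction of the first paragraph.

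The hard part will be the bookkeeping behind the numerology rather than any new homological input. One must check that weighting by levels really does make the Batson--Seed limit the honest split union of $n$ copies of $L^1$ (and, for the sublinks, the correct split union) rather than some other recombination of the components, and --- crucially --- that the explicit meridional disks coming from the stacking picture are genuinely minimal, so that the additivity $\wrap(L^n)=n\cdot\wrap(L^1)$ and the analogous additivity over sublinks hold and the upper and lower bounds meet; this additivity of the wrapping number under the stacking operation is the key geometric point and must be argued from Figure~\ref{tangle}. The base case for $L^1$ is the other place where the specific tangle $J$ enters, and the whole argument is only as strong as the claim that the maximal non-zero annular grading of $\AKh(L^1)$ equals $\wrap(L^1)$. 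Throughout, the invariance of the maximal annular grading under split union (tensor product formula) and under connected sum with links in $S^3$ (\cite[Lemma~3.5]{grigsby_sutured_2014}), combined with the preservation of the $k$-grading by the spectral sequence, is the technical glue that makes the successive reductions go through.
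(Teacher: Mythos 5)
There is a genuine gap, and it concerns the central geometric point of these examples rather than a side issue. Your argument rests on the additivity $\wrap(L^n)=n\cdot\wrap(L^1)$, but this is false: the $n$ copies of $J$ are stacked in the direction running \emph{around} the annulus, so the components of $L^n$ occupy $n$ consecutive angular positions and a single meridional disk can be chosen to pass through only one of them, meeting $L^n$ in exactly $2$ points. Hence $\wrap(L^n)=\wrap(L^1)=2$ for all $n$; this is precisely why the paper can exhibit a disk meeting $L^n_{\textbf{m},\textbf{s}}$ in only $2p$ points with $p=\min_i m_i$, and why the wrapping number of these links stays small relative to the number of components. Consequently your claimed lower bound is wrong in a fatal way: if $\AKh(L^n)$ were non-zero in annular grading $2n$ you would have \emph{disproved} the categorified wrapping conjecture for $L^n$, not verified it. The Batson--Seed argument you propose for the base case cannot produce such a class in any event, because splitting $L^n$ into its components yields (for $n\ge 2$) a union of unknots each of which is isotopic into a ball once its clasping neighbours are removed, so the $E_\infty$ page is supported in annular grading $0$ and the rank inequality gives nothing in positive annular gradings. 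The paper's base case comes from an entirely different mechanism: the non-vanishing of $\AKh(L^n)$ in grading $2$ is imported from Xie's spectral sequence to annular instanton Floer homology together with the Xie--Zhang detection theorem for the Thurston norm of meridional surfaces. That external input is indispensable and is absent from your proposal.

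The cabling step in your first paragraph is in the right spirit but its numerology fails for the same reason. Splitting all $m_i$ strands of the $i$-th satellite produces a limit whose top annular grading is a sum over all the partial push-offs, which does not match $\wrap(L^n_{\textbf{m},\textbf{s}})=2\min_i m_i$. The paper's fix is to use only $p=\min_i m_i$ distinct weights, assigning $w_1,\dots,w_p$ to the first $p$ strands of \emph{every} satellite and one further weight to all remaining components, so that the spectral sequence of Theorem~\ref{annularBS} converges to a disjoint-union-and-connected-sum modification of $p$ parallel copies of the full link $L^n$; the tensor product formula and \cite[Lemma~3.5]{grigsby_sutured_2014} then give non-vanishing exactly in grading $2p$, matching the upper bound from the explicit disk.
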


\begin{rem} 
Notice that Theorem~\ref{AKhthm} does not follow immediately from Theorem~\ref{generalcable} because here we are allowing the different components of the link to be replaced by links built from string links of varying numbers of strands.
\end{rem}

\begin{proof}

First notice the fact that the annular Khovanov homology of $L^n$ is non-zero in $k$-grading 2 follows immediately from the existence of a spectral sequence to annular instanton Floer homology~\cite{xie_instantons_2018} and that this theory is known to detect the Thurston norm of meridional surfaces~\cite{xie_instanton_2019}.

Let $p=m_j$ be the minimum over all the $m_i$, then there is a disk which intersects $L^{n}_{\textbf{m},\textbf{f}}$ in $2p$ points. Now we choose weights $w_1, \ldots , w_p, w_{p+1}\in \mathbb{C}$ and weight the components of $L^{n}_{\textbf{m},\textbf{s}}$ so that the first $p$ components of each string link are weighted by $w_1, \ldots , w_p$ and all remaining components are weighted by the final weight $ w_{p+1}$. 

Using these weights, the link splitting spectral sequence converges to the $k,\ell$ bi-graded annular Khovanov homology of a link built from the disjoint union of $p$ copies of $L^n$ by taking additional disjoint unions and connected sums with links in $S^3$. Because the annular Khovanov homology of $L^n$ is non-zero in $k$-grading 2, arguments from the proof of Theorem~\ref{generalcable} show the annular Khovanov homology of a link built from the disjoint union of $p$ copies of $L^n$ by taking additional disjoint unions and connected sums with links in $S^3$ is non-zero in grading $2p$. Then rank inequality from the annular link splitting spectral sequence ensures that $\AKh(L_{\textbf{m},\textbf{s}})$ is non-zero in $k$-grading $2p$ as well.

\end{proof}

There are other families of annular links that we can also show satisfy the categorified wrapping conjecture. The arguments work on the decategorified level and so the proofs of these theorems are delayed until Section~\ref{KBSection}. For completeness, we state that the categorified wrapping conjecture holds for these families as well.

\begin{thm}\label{AKhBraidthm}
Let $L^n$ be the annular link built by vertically stacking $n$ copies of tangle $J$ from Figure~\ref{tangle} and then taking the annular closure. Also, let $L^{n}_{m,\beta_1, \ldots, \beta_n}$ be the link built by replacing the $i$-th component of $L^n$ with the closure of the $m$-braid $\beta_i$. Then the categorified wrapping conjecture holds for the cable $L^{n}_{m,\beta_1, \ldots, \beta_n}$.
\end{thm}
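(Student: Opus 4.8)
The plan is to reduce Theorem~\ref{AKhBraidthm} to Theorem~\ref{AKhthm}, noting that the braid closure case is a special case of the string-link closure construction. Recall that every $m$-braid $\beta_i$ determines an $m$-string link $T_i$ (with no additional closed components), and the braid closure of $\beta_i$ is exactly the annular closure of that string link considered in Theorem~\ref{AKhthm}. Thus the link $L^n_{m,\beta_1,\ldots,\beta_n}$ is literally an instance of $L^n_{\textbf{m},\textbf{s}}$ with all $m_i = m$ and each $s_i$ the closure of the string link $T_i = \beta_i$. Since Theorem~\ref{AKhthm} has already been proved, we conclude immediately that the categorified wrapping conjecture holds for $L^n_{m,\beta_1,\ldots,\beta_n}$.

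For the benefit of the reader I would also spell out why the argument gives the correct grading. A meridional disk $D$ meeting $L^n$ in $2$ points meets $L^n_{m,\beta_1,\ldots,\beta_n}$ in $2m$ points, because each of the two strands is cabled by $m$ parallel strands of the braid, so $\wrap(L^n_{m,\beta_1,\ldots,\beta_n}) \le 2m$. For the matching lower bound, apply the annular Batson--Seed spectral sequence of Theorem~\ref{annularBS} over $\mathbb{C}$: choose $m+1$ distinct weights $w_1,\ldots,w_{m+1}$, weight the $m$ strands of each braid closure by $w_1,\ldots,w_m$ respectively, and weight any remaining components (there are none in this case, but the argument is uniform) by $w_{m+1}$. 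Since all pairwise differences of weights are invertible in $\mathbb{C}$, the spectral sequence converges to $\AKh$ of a link obtained from the disjoint union of $m$ copies of $L^n$ by connected sums and disjoint unions with links in $S^3$. By the computation recalled in the proof of Theorem~\ref{AKhthm}, $\AKh(L^n)$ is nonzero in $k$-grading $2$, so the limiting term is nonzero in $k$-grading $2m$ (disjoint union multiplies the top grading via the tensor product, and connected sums with $S^3$ links leave it unchanged by~\cite[Lemma~3.5]{grigsby_sutured_2014}). The rank inequality furnished by the spectral sequence then forces $\AKh(L^n_{m,\beta_1,\ldots,\beta_n})$ to be nonzero in $k$-grading $2m$ as well, which is the desired maximal annular grading.

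There is essentially no obstacle here: the only point requiring care is the bookkeeping that a braid genuinely is a string link, so that Theorem~\ref{AKhthm} applies verbatim, and that the string-link structure is what makes the Batson--Seed splitting separate the $m$ parallel strands into $m$ copies of the base link $L^n$ rather than tangling them. Since braids are the prototypical example of string links, this is immediate, and the theorem follows.
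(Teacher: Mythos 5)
Your reduction to Theorem~\ref{AKhthm} has a genuine gap: an $m$-braid is an $m$-string link only when it is a \emph{pure} braid. A string link, by definition, joins the $i$-th bottom endpoint to the $i$-th top endpoint, so its closure in the solid torus has exactly $m$ components coming from the $m$ strands; this is precisely what the proofs of Theorems~\ref{generalcable} and~\ref{AKhthm} use when they assign $m$ distinct Batson--Seed weights $w_1,\ldots,w_m$ to ``the $m$ components of the string link closure.'' A general braid $\beta_i$ induces a nontrivial permutation of its endpoints, so its closure has one component per cycle of that permutation --- possibly a single component winding $m$ times around (e.g.\ $\beta=\sigma_1\cdots\sigma_{m-1}$). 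Weights live on components, not strands, so your prescription ``weight the $m$ strands of each braid closure by $w_1,\ldots,w_m$'' is not available, and if you instead weight the actual components distinctly, the spectral sequence converges not to $m$ parallel copies of $L^n$ but to a disjoint union of satellites of $L^n$ with \emph{connected} patterns of various winding numbers --- links for which the categorified wrapping conjecture is not already known, so the argument becomes circular. The non-pure case is exactly the content that Theorem~\ref{AKhBraidthm} adds beyond Theorem~\ref{AKhthm}, which is why it is stated separately.

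The paper takes an entirely different route for this theorem: it is deduced from the decategorified statement, Theorem~\ref{KBthm}. There one shows diagrammatically that the all-$1$'s resolution of $L^n_{m,1,\ldots,1}$ yields an adequate (``minus-adequately wrapped'') diagram, so the single generator with all circles labelled $+$ survives in the Kauffman bracket in annular degree $2m$; then Hoste--Przytycki's Lemma~10 is invoked to insert the arbitrary braids $\beta_i$ while preserving this property. Since the Kauffman bracket is the graded Euler characteristic of $\AKh$, nonvanishing of the bracket in annular degree $2m$ forces $\AKh$ to be nonzero there, giving the categorified statement. Your spectral-sequence approach does go through verbatim when every $\beta_i$ is a pure braid, but to repair it in general you would need either the paper's adequacy argument or some other mechanism that handles patterns with nonzero winding inside the cabling annulus.
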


\begin{thm}\label{AKhWhThm}

Let $K^n$ be the iterated Whitehead double of the annular link $L^1$ obtained as the annular closure of the tangle $J$ from Figure~\ref{tangle} and let $K^n_{m,\beta}$ be the annular knot obtained by replacing $K^n$ with the closure of the $m$-braid $\beta$. Then the categorified wrapping conjecture holds for $K^n_{m,\beta}$.

\end{thm}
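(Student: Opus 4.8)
The plan is to deduce Theorem~\ref{AKhWhThm} from its decategorified counterpart, Theorem~\ref{KBWhThm}, together with the fact of Asaeda--Przytycki--Sikora~\cite{asaeda_categorification_2004} that, in each fixed annular grading, the corresponding graded piece of the Kauffman bracket of an annular link is the graded Euler characteristic of the annular Khovanov homology in that grading. Given this, it suffices to establish two things: first, that $\AKh(K^n_{m,\beta})$ is supported in annular gradings $k$ with $|k|\le m\cdot 2^{n+1}$ and that a meridional disk realizing equality exists; second, that the Kauffman bracket of $K^n_{m,\beta}$ has a non-zero coefficient in annular degree $m\cdot 2^{n+1}$. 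The second assertion is exactly Theorem~\ref{KBWhThm}, so the work left here is to assemble the first assertion and the passage between the two.

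For the upper bound I would track the wrapping number. A Whitehead double is a satellite with pattern of wrapping number $2$, so each doubling doubles the companion's wrapping number; starting from $\wrap(L^1)=2$ this gives $\wrap(K^n)=2^{n+1}$, realized by an explicit meridional disk $D_0$. Replacing the knot $K^n$ by the closure of an $m$-braid is a satellite with pattern of wrapping number $m$, so $D_0$ is carried to a meridional disk meeting $K^n_{m,\beta}$ transversely in $m\cdot 2^{n+1}$ points; hence $\wrap(K^n_{m,\beta})\le m\cdot 2^{n+1}$. On the other hand, for any annular link $L$ and any meridional disk $D$ meeting $L$ in $w$ points, every resolution of a diagram adapted to $D$ contains at most $w$ essential circles (smoothing crossings does not change the intersection with the cut-arc, and each essential circle meets it at least once), so $\AKh(L)$ is supported in annular gradings of absolute value at most $w$; applied to the disk above, the maximal non-zero annular grading of $\AKh(K^n_{m,\beta})$ is at most $m\cdot 2^{n+1}$.

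Granting Theorem~\ref{KBWhThm}, the Kauffman bracket of $K^n_{m,\beta}$ has non-zero annular-degree-$(m\cdot 2^{n+1})$ part, so by the Euler characteristic identity the summand of $\AKh(K^n_{m,\beta})$ in $k$-grading $m\cdot 2^{n+1}$ cannot vanish; combined with the bound of the previous paragraph, this shows the maximal non-zero annular grading of $\AKh(K^n_{m,\beta})$ is exactly $m\cdot 2^{n+1}=\wrap(K^n_{m,\beta})$, i.e.\ the categorified wrapping conjecture. (One could alternatively feed the $m$ strands of $\beta$ into the annular link splitting spectral sequence of Theorem~\ref{annularBS} to reduce to the case of $K^n$ itself, but Theorem~\ref{KBWhThm} supplies the stronger decategorified statement directly, so the Euler characteristic route is shorter.) The genuine difficulty, and the step I expect to be the main obstacle, is therefore internal to Theorem~\ref{KBWhThm}: one must choose a convenient diagram of $K^n_{m,\beta}$, identify the resolutions that can contribute to the extreme annular degree --- those in which every clasp of every Whitehead double, together with the braid $\beta$, is resolved so as to create the maximal number of essential circles --- and then prove that their contributions to the Kauffman bracket do not cancel in that degree, using the local band-move description of Figure~\ref{10res} and the sutured-tangle viewpoint, and verifying that both iterating the Whitehead double and cabling by $\beta$ preserve this non-cancellation.
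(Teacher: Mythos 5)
Your proposal is correct and matches the paper's own route: the paper explicitly defers the proof of Theorem~\ref{AKhWhThm} to the decategorified argument of Theorem~\ref{KBWhThm}, noting that "the arguments work on the decategorified level," which is precisely your Euler-characteristic deduction combined with the standard upper bound on the annular support coming from a meridional disk. Your only (harmless) imprecision is asserting $\wrap(K^n)=2^{n+1}$ as an equality up front --- a priori doubling and cabling only give the upper bound on the wrapping number --- but you use only that direction, and the equality is recovered at the end from the non-vanishing of the bracket, exactly as in the paper.
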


\section{The maximal annular degree of the Kauffman bracket}~\label{KBSection} 

In this section we show that for a subset of the families of links we have considered previously that not only is the annular Khovanov homology non-zero in the top annular grading but the Kauffman bracket is as well. The main technique we use to show that the Kauffman bracket is non-zero for this subset is to work diagramatically and demonstrate that there is a quantum grading with exactly one generator in the annular Khovanov chain complex and that this generator is in the maximal annular grading. The generator in question will always be the generator obtained by taking a 1-resolution at each crossing and labeling every circle with a ``+".

This generator is in the maximal quantum grading for the chain complex and it has been observed that any other generator in the maximal quantum grading must come from a resolution that be connected to the all 1's resolution by a path of changes of resolutions where any time a 0-resolution changes to a 1-resolution two distinct circles merge together. Alternatively working backwards from the all 1's resolution changing a 1-resolution back to a 0-resolution must result in the splitting of a circle~\cite[Proposition~1]{gonzalez-meneses_geometric_2018}.

\begin{thm}\label{KBthm}
Let $L^n$ be the annular link built by vertically stacking $n$ copies of tangle $J$ from Figure~\ref{tangle} and then taking the annular closure. Also, let $L^{n}_{m,\beta_1, \ldots, \beta_n}$ be the link built by replacing the $i$-th component of $L^n$ with the closure of the $m$-braid $\beta_i$. Then the wrapping conjecture holds for the cable $L^{n}_{m,\beta_1, \ldots, \beta_n}$.
\end{thm}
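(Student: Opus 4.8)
The strategy is the one indicated above: produce, for a suitable diagram of $L^{n}_{m,\beta_1,\ldots,\beta_n}$, a quantum grading of the annular Khovanov chain complex that contains a single generator, lying in annular grading $2m$, and pair this with a geometric upper bound of $2m$ for the wrapping number. Concretely I would establish three inequalities: first, $\wrap(L^{n}_{m,\beta_1,\ldots,\beta_n})\le 2m$, since $L^{n}_{m,\beta_1,\ldots,\beta_n}$ admits a meridional disk meeting it in $2m$ points, namely the $m$-cable of a meridional disk meeting $L^n$ in two points; second, the elementary bound that the maximal non-zero annular degree of the Kauffman bracket of any annular link is at most its wrapping number, which is immediate from the Kauffman state sum because each state has at most $\wrap(L)$ essential circles; and third, that the maximal non-zero annular degree of the Kauffman bracket of $L^{n}_{m,\beta_1,\ldots,\beta_n}$ is at least $2m$. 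Given the three, the maximal non-zero annular degree is at least $2m$ (by the third) and at most $\wrap(L^{n}_{m,\beta_1,\ldots,\beta_n})\le 2m$ (by the second and first), hence equals the wrapping number, so the wrapping conjecture holds.

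For the third and main inequality I would fix a diagram $D$ for $L^{n}_{m,\beta_1,\ldots,\beta_n}$ built in the evident way from $n$ stacked copies of the $m$-cable of $J$ with the braids $\beta_1,\ldots,\beta_n$ inserted along the strands, and consider the generator $x\in\CAKh(D)$ obtained by taking the $1$-resolution at every crossing and labeling every resulting circle with a ``$+$''. As recalled above, $x$ lies in the maximal quantum grading of $\CAKh(D)$, and by \cite[Proposition~1]{gonzalez-meneses_geometric_2018} it is the unique generator of $\CAKh(D)$ in that quantum grading precisely when, in the all-$1$ resolution of $D$, the two arcs incident to each crossing lie on distinct circles (equivalently, switching any single crossing to its $0$-resolution merges two circles). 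When this holds, $x$ is automatically a cycle, is not a boundary, and is not in the image of the differential, so it represents a non-zero class in $\AKh(L^{n}_{m,\beta_1,\ldots,\beta_n})$ in the annular grading of $x$; taking graded Euler characteristics, the quantum grading of $x$ then contributes a single nonzero ($\pm1$) coefficient to the Kauffman bracket, which therefore does not vanish in the annular degree of $x$. (This simultaneously reproves Theorem~\ref{AKhBraidthm}.)

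It then remains to verify two diagrammatic facts about the all-$1$ resolution of $D$: that at every crossing the two incident arcs lie on distinct circles, and that the resolution has exactly $2m$ essential circles, so that with the all-``$+$'' labeling $x$ has annular grading $2m$. For the second point the bound $\le 2m$ is already forced by the meridional disk above, so only an explicit count of $\ge 2m$ parallel essential circles in the resolution is needed. For the first point, the crossings of $D$ come in two families --- those internal to the braids $\beta_i$, and those internal to the $m$-cables of the copies of $J$ --- and one checks the distinct-circles condition for each family by inspecting the all-$1$ resolution of the tangle $J$ of Figure~\ref{tangle}: the essential feature is that this resolution contains essential circles that ``fence off'' each inserted braid, so that each turnback produced by $1$-resolving a braid crossing joins two arcs that already lie on distinct circles, whatever the braid words $\beta_i$ are.

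I expect the distinct-circles verification to be the crux, because it must hold uniformly in the braids $\beta_i$: no positivity or simplicity of $\beta_i$ may be assumed, so the property has to follow entirely from the geometry of $J$ and the way the braids are inserted into its $m$-cable, which is exactly where the specific tangle of Figure~\ref{tangle} is used. Once these two facts are established the proof closes as in the first paragraph, and the whole argument is diagrammatic, as claimed. The same scheme, with $D$ replaced by a diagram of the iterated Whitehead double built from $J$, proves Theorem~\ref{KBWhThm}.
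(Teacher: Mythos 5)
Your overall scheme (upper bound from a meridional disk, the easy direction of the wrapping inequality for the Kauffman bracket, and a unique all-``$+$'' generator in the top quantum grading of the all-$1$ resolution) matches the paper's Step~1, and your inductive picture of the all-$1$ resolution of the $m$-cable of $J$ is exactly what the paper computes. The gap is in how you handle the braids $\beta_i$. You propose to verify the distinct-circles (minus-adequacy) condition for the \emph{entire} diagram, including the crossings internal to each $\beta_i$, and you assert that the ``fencing off'' by essential circles forces each braid crossing's two smoothing arcs onto distinct circles ``whatever the braid words $\beta_i$ are.'' That claim is false in general: the obstruction is internal to the braid, not an interaction with the rest of the diagram. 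For a braid word containing both a generator whose $1$-smoothing is a turnback and, adjacent to it on the same pair of strands, a generator whose $1$-smoothing is the identity smoothing (e.g.\ a subword of the form $\sigma_j^{\varepsilon}\sigma_j^{-\varepsilon}$, or more generally any configuration in which a turnback elsewhere in the braid joins the two strands entering an identity-type smoothing), the two arcs at that crossing lie on the \emph{same} circle of the all-$1$ resolution, so the diagram is not minus-adequate there and the top quantum grading is no longer guaranteed to contain a single generator. Since the theorem allows arbitrary $\beta_i$, and there is no reason a given braid admits a word avoiding such configurations, your third inequality is not established.

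The paper sidesteps exactly this issue: it carries out the all-$1$ resolution computation only for the trivial braids, concluding that $L^n_{m,1,\ldots,1}$ has a \emph{minus-adequately wrapped} diagram in the sense of Hoste--Przytycki, and then invokes \cite[Lemma~10]{hoste_2_1995}, which says that inserting a braid into a minus-adequately wrapped diagram yields a link for which the wrapping conjecture still holds (not that the new diagram remains minus-adequate). To repair your argument you should either restrict Step~3 to the identity braids and import that lemma for Step~2, or supply an independent proof of its content; as written, the ``crux'' you identify is precisely the point that does not go through.
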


The proof of Theorem~\ref{KBthm} is broken up into two parts. In the first two part of the proof we show the wrapping conjecture holds for the blackboard framed $m$-cable of $L^n$ by identifying a specific tangle and computing its all 1's resolution. Then to finish the proof we apply a result of Hoste-Przytycki.

\begin{proof}

\begin{figure}

  \centering
    \includegraphics[width=0.3\textwidth]{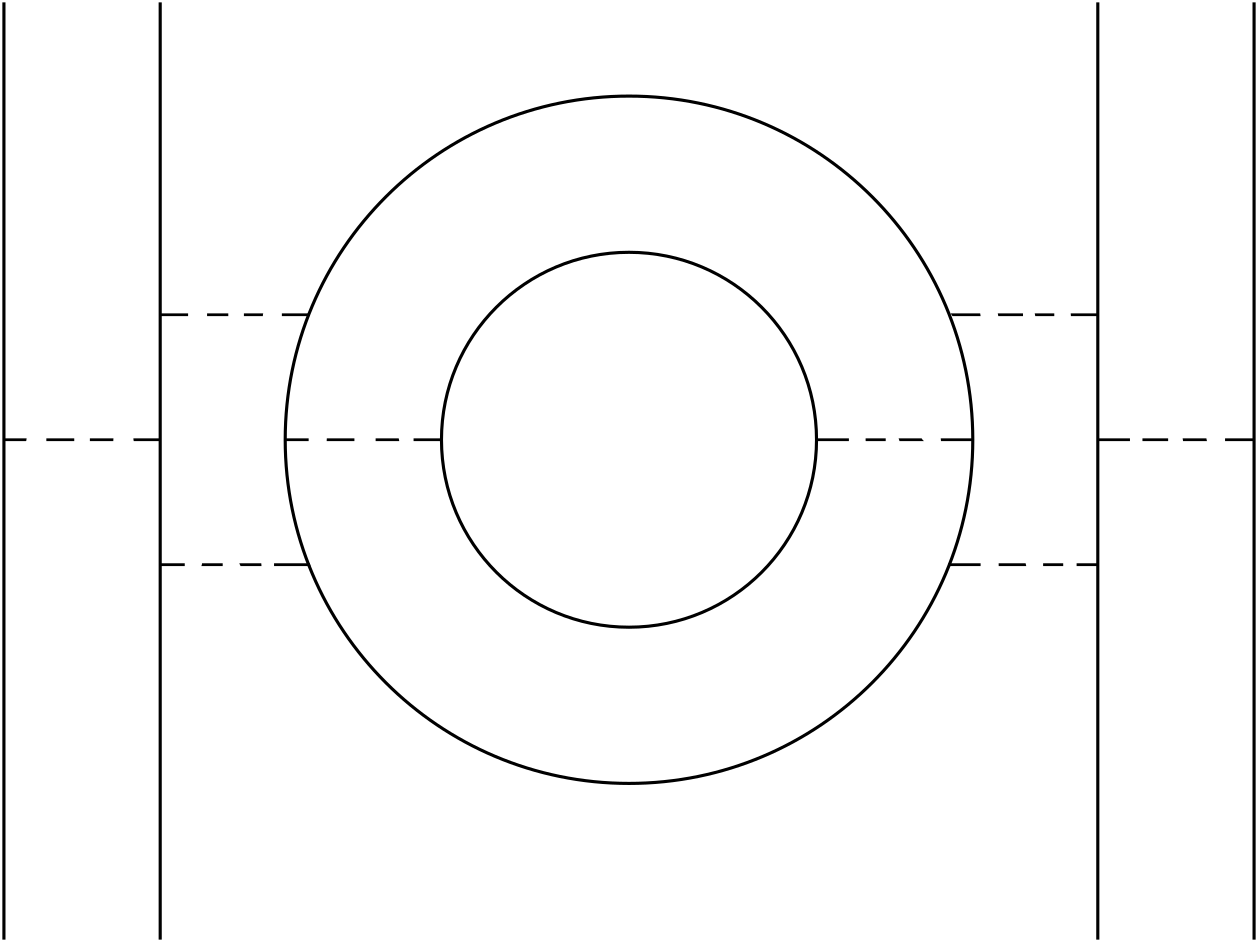}
      \caption{The resolution of the balanced tangle obtained from the 2-cable $L_2$. The dotted lines indicate where to attach bands to change back to a 0-resolution.}\label{2cableRes}
\end{figure}

\emph{Step~1:} We first compute the all 1's resolution for the sutured Khovanov homology of the balanced tangle whose annular closure is the annular link $L^{1}_{m, 1, \ldots, 1}$ where all the braids are the identity braid. For the 2-cable, the all 1's resolution is shown in Figure~\ref{2cableRes}. The dashed lines represent the bands that would be added to change a single 1-resolution back to a 0-resolution.

We will verify that for every $m$, the resolution in question is a tangle with $m$ strands running from top to bottom on the left followed by $m$ concentric circles followed by $m$ strands runnings from top to bottom on the right. We will also show that all the bands that should be attached to change a 1-resolution back to a 0-resolution are either between adjacent strands, between adjacent circles, or between the outermost circle and one of the strands adjacent to it.

Notice that the diagram for the sutured Khovanov homology of the $m$-cable can be viewed as a combination of the crossings involving either of the outer most strands and then the crossings that only involve the inner $m-1$ strands. The crossings involving only the inner $m-1$ cable correspond exactly to a diagram for the sutured Khovanov homology of the $m-1$-cable. We will consider resolving each of these halves individually and then place them together.

Considering the outer half and resolving all the crossings with the 1-resolution gives a tangle that is pictured for the $m$-cable in Figure~\ref{MidComputation}. The dashed lines indicate where to attach a band to go back to a $0$-resolution and the bold square represents where to place the inner half of the tangle. Notice that the bands all connect two different components of the tangle.

\begin{figure}

  \centering
    \includegraphics[width=0.55\textwidth]{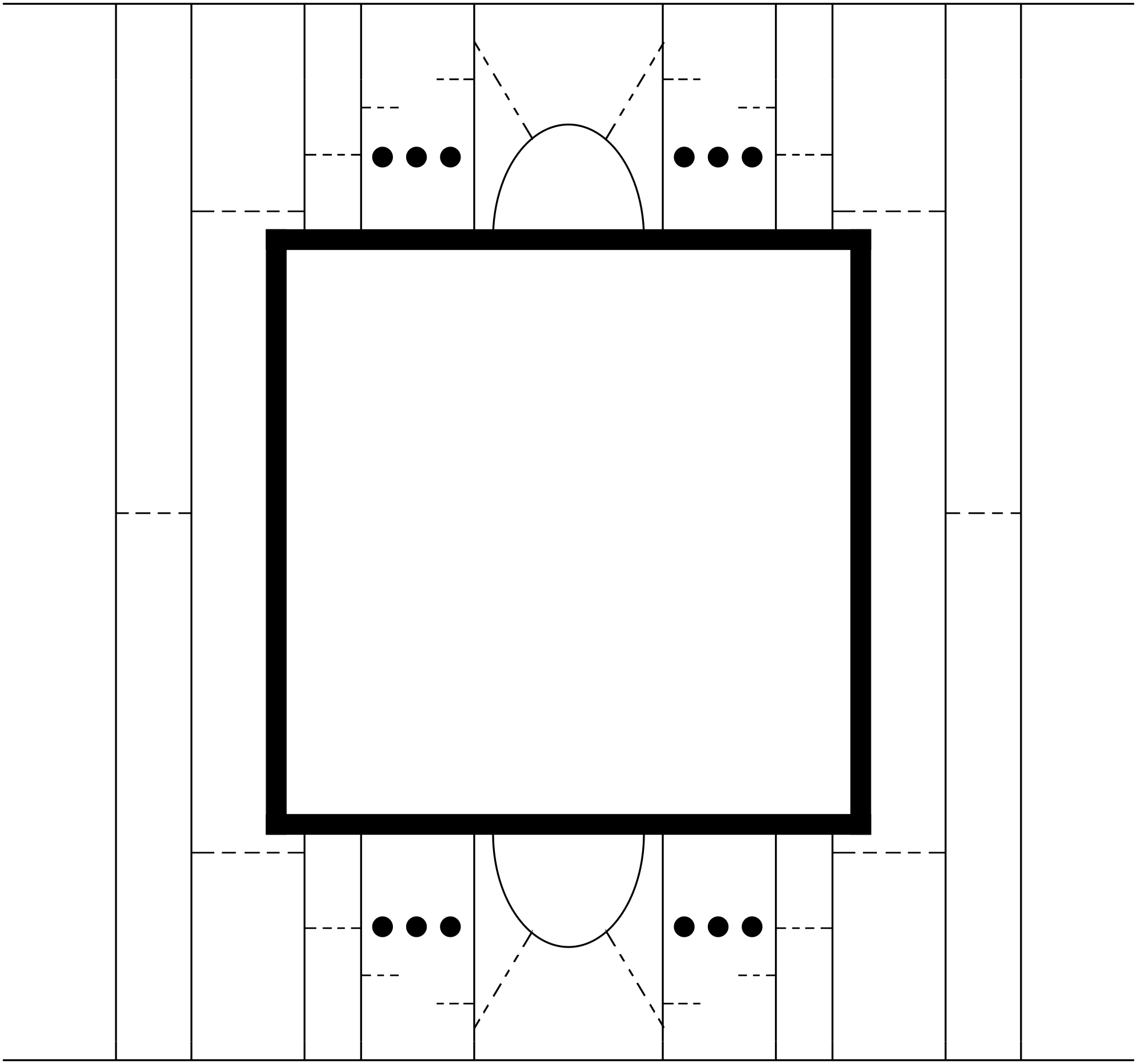}
      \caption{The all 1's resolution of the outer half of the tangle. The dashed lines indicate where to attach a band to go back to a $0$-resolution and the bold square represents where to place the inner half of the tangle.}\label{MidComputation}
\end{figure}

We can obtain the resolution of the $m$-cable by taking the resolved tangle from the previous paragraph and filling the middle hole with the resolution of the $m-1$-cable. Doing this gives a tangle with $m$ strands running from top to bottom on the left followed by $m$ concentric circles followed by $m$ strands runnings from top to bottom on the right. Furthermore, all the bands that should be attached to change a 1-resolution back to a 0-resolution are either between adjacent strands, between adjacent circles, or between the outermost circle and one of the two strands adjacent to the circle.

Now the all 1's resolution of the link $L^n_{m, 1, \ldots, 1 }$ can be obtained by stacking $m$ copies of the resolution considered above and taking the annular closure. Doing this gives a resolution that consists of $2m$ circles running around the annular axis and $n$ groups of $m$ concentric homotopically trivial circles. Furthermore all of the bands we would attach to revert a $1$-resolution back to a $0$-resolution run between distinct circles. This guarantees that the generator where all of these circles are labeled with a ``+" is the only generator of the chain complex in its quantum grading. It also sits in the maximal annular grading of $2m$ showing that the Kauffman bracket is non-zero in this annular grading and that the wrapping conjecture holds for the link $L^n_{m, 1, \ldots, 1 }$. In the language of~\cite{hoste_2_1995} we have shown that these links have minus-adequately wrapped diagrams.

\emph{Step~2:} A diagram for the link $L^{n}_{m,\beta_1, \ldots, \beta_n}$ can be obtained from a diagram for the link $L^n_{m, 1, \ldots, 1 }$ by the addition of the braids $\beta_i$ in the appropriate places in the diagram. Hoste-Przytycki give an argument showing that starting with a minus-adequately wrapped diagram and adding in a braid $\beta$ produces a new link for which the wrapping conjecture also holds~\cite[Lemma~10]{hoste_2_1995}. Repeated applications of this argument show that the wrapping conjecture holds for the link $L^{n}_{m,\beta_1, \ldots, \beta_n}$.
\end{proof}

\begin{thm}\label{KBWhThm}

Let $K^n$ be the iterated Whitehead double of the annular link $L^1$ obtained as the annular closure of the tangle $J$ from Figure~\ref{tangle} and let $K^n_{m,\beta}$ be the annular knot obtained by replacing $K^n$ with the closure of the $m$-braid $\beta$. Then the wrapping conjecture holds for $K^n_{m,\beta}$.

\end{thm}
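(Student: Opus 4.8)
The plan is to follow the two-step template of the proof of Theorem~\ref{KBthm}: in Step~1 I treat the trivial-braid cable $K^n_{m,1}$ by understanding a specific resolution, and in Step~2 I insert the braid $\beta$ and apply Hoste--Przytycki's~\cite[Lemma~10]{hoste_2_1995}.

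\emph{Step~1.} Starting from a meridional disk meeting $L^1$ in the minimal $2$ points, each Whitehead doubling produces a meridional disk meeting $K^i$ in twice as many points as one meeting $K^{i-1}$, and the $m$-cabling multiplies the count by $m$, so there is a meridional disk meeting $K^n_{m,1}$ in $m\cdot 2^{n+1}$ points; hence $\wrap(K^n_{m,1})\le m\cdot 2^{n+1}$ and it suffices to show the Kauffman bracket is non-zero in annular grading $m\cdot 2^{n+1}$. I will describe, by induction on the number of doublings, the all-$1$'s resolution of the balanced tangle $\mathcal{T}$ obtained by cutting $K^n_{m,1}$ along this disk. The base case is the tangle $J$ of Figure~\ref{tangle}, whose all-$1$'s resolution --- two essential arcs from top to bottom, together with a controlled collection of trivial circles, every reverting band joining distinct circles --- is the $n=m=1$ instance of the computation in the proof of Theorem~\ref{KBthm}. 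Passing from $K^{i-1}$ to $K^i$ replaces the cut tangle by its blackboard-framed $2$-cable with one clasp inserted along the strand. The key local computation is that, for the appropriate handedness of the clasp, its all-$1$'s resolution is the ``straight-through'' smoothing of the two parallel strands it is attached to together with one small trivial circle, and each of its two reverting bands joins that trivial circle to one of the two strands --- hence joins distinct circles. Consequently $2$-cabling doubles the number of essential arcs, turns each trivial circle into a concentric pair, adds the standard nests of trivial circles coming from $2$-cabling the crossings, and preserves the property that every reverting band joins distinct circles; the clasp insertion preserves this property while leaving the essential-arc count doubled. Applying the blackboard-framed $m$-cable multiplies the essential-arc count by $m$ and is handled verbatim as in Theorem~\ref{KBthm}: peel off the outer two strands and the inner $(m-1)$-cable, and track the resulting nests of concentric trivial circles. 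Taking the annular closure then produces $m\cdot 2^{n+1}$ distinct essential circles together with nested families of trivial circles, with every reverting band joining distinct circles, and this is the maximal possible number of essential circles since $\mathcal{T}$ has $m\cdot 2^{n+1}$ endpoints. Hence the generator labeling every circle of the all-$1$'s resolution by ``$+$'' is the unique generator of the annular Khovanov chain complex in its quantum grading, so it survives to homology; it lies in annular grading $m\cdot 2^{n+1}=\wrap(K^n_{m,1})$, the Kauffman bracket is non-zero there, and $K^n_{m,1}$ has a minus-adequately wrapped diagram.

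\emph{Step~2.} A diagram for $K^n_{m,\beta}$ is obtained from the one just described by inserting finitely many braids --- the braid $\beta$, together with a braid at each doubling if the framing convention for the Whitehead double differs from the blackboard framing --- so repeated application of~\cite[Lemma~10]{hoste_2_1995} shows the wrapping conjecture still holds, completing the proof.

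The part I expect to require genuine care is the inductive bookkeeping in Step~1. A clasp introduced at level $i$ is itself $2$-cabled at every later level and then $m$-cabled, so one must check that each iterated cable of a clasp still has the straight-through all-$1$'s resolution and still contributes only reverting bands between distinct circles, and in particular that none of the many nested trivial circles created at deeper levels ever carries both feet of a single reverting band. I expect this to follow from a uniform description of the all-$1$'s resolution of $K^n_{m,1}$ --- $m\cdot 2^{n+1}$ essential circles in $2^{n+1}$ parallel groups of $m$, decorated at each crossing site by a standard nest of trivial circles exactly as in Theorem~\ref{KBthm} --- whose stability under $2$-cabling and clasp insertion is the real content; the local computations, including verifying the straight-through smoothing of the clasp and pinning down its handedness, are routine.
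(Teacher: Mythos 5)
Your proof is correct and follows essentially the same route as the paper: exhibit a minus-adequately wrapped diagram for $K^n_{m,1}$ by checking that every reverting band in the all-$1$'s resolution joins distinct circles, then insert $\beta$ via~\cite[Lemma~10]{hoste_2_1995}. The paper streamlines your Step~1 by observing that a diagram for $K^n_{m,1}$ is literally an annular closure of stacked blackboard-framed cables of the tangle $J$ juxtaposed with trivial braids (each clasp being a copy of $J$ next to trivial strands), so the computation from Theorem~\ref{KBthm} applies directly and your separate inductive clasp analysis is subsumed.
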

\begin{proof}
 Notice that a diagram for $K^n_{m,1}$ can be built by stacking blackboard framed cables of the tangle $J$ from Figure~\ref{tangle} along with trivial braids to the left or right of these tangles and then taking an annular closure. This observation and the arguments from the proof of Theorem~\ref{KBthm} show that the generator from all 1's resolution with every circle marked with a $+$ sign is the only generator in its quantum grading and it is in the appropriate $k$-degree so the wrapping conjecture holds for $K^n_{m,1}$. In other words, there is a minus-adequately wrapped diagram for $K^n_{m,1}$. Applying~\cite[Lemma~10]{hoste_2_1995} ensures that the wrapping conjecture also holds for $K^n_{m,\beta}$.

\end{proof}

\bibliography{AKhTop}

\end{document}